\theoremstyle{plain}
\newtheorem{theorem}{Theorem}[section]
\newtheorem{lemma}[theorem]{Lemma}
\newtheorem{proposition}[theorem]{Proposition}
\newtheorem{corollary}[theorem]{Corollary}
\newtheorem{definition}[theorem]{Definition}
\theoremstyle{definition}
\newtheorem{hyp}{Assumption}
\theoremstyle{remark}
\newtheorem{remark}{Remark}[section]
\numberwithin{equation}{section}
\newcommand{\bol}[1]{\textbf{#1}}
\def\cC{{\mathcal C}}
 \def\d{{\rm d }}
 \def\F{{\mathcal F}}
\def\G{{\mathcal G}}
\def\J{{\mathcal J}}
\def\L{{\mathsf L}}
\def\N{{\mathbb N}}
\def\E{{\mathsf E}}
\def\P{{\mathsf P}}
\def\cP{{\mathcal P}}
\def \bolQ{Q}
\def\R{{\mathsf R}}
\def\cR{{\mathcal R}}
\def\U{{\cal U}}
\def\cX{{\mathcal X}}
\def \bolG{G}
\def \bolN{N}
\def\bbolN{{\bar N}}
\def \Ndemeps{N^{\rm \dem, \epsilon}}
\def\bolmu{\mu}
\def\bolei{e_i}
\def\bolej{e_j}
\def\sw{{\rm  sw}}
\def\dem{{\rm  dem}}
\def\bolnu{\nu} 
\def\bollambda{\lambda}
\def\cal{\mathcal} 
\def\ind{{\bf 1}}
\def \indi{\mathds{1}}
\newcommand{\rmi}{{\rm (i) $\hspace{1mm}$}}
\newcommand{\rmii}{{\rm (ii) $\hspace{1mm}$}}
\newcommand{\rmiii}{{\rm (iii)$\hspace{1mm}$}}
\newcommand{\rma}{{\rm a) $\>\>$}}
\newcommand{\rmb}{{\rm b) $\>\>$}} 
\newcommand{\rmc}{{\rm c)$\>\>$}}
\title{ Birth Death Swap population in random environment and aggregation with two timescales. }
\newcommand{\footremember}[2]{%
    \footnote{#2}
    \newcounter{#1}
    \setcounter{#1}{\value{footnote}}%
}
\author{%
Sarah Kaakai \footremember{LMM}{Laboratoire Manceau de Mathématiques (LMM) -  EA 3263, Le Mans Université, Avenue Olivier Messiaen
72085 Le Mans, France;  Email: {sarah.kaakai@univ-lemans.fr}.}%
  \and Nicole El Karoui\footremember{LPSM}{Laboratoire de Probabilités, Statistique et Modélisation (LPSM), Sorbonne Université, 4 Place Jussieu, 75005 Paris, France;  Email: {nicole.el$\_$karoui@sorbonne.universite.fr}}.}
\date{}
\renewenvironment{abstract}
 {\small
  \begin{center}
  \bfseries \abstractname\vspace{-.5em}\vspace{0pt}
  \end{center}
  \list{}{
    \setlength{\leftmargin}{0cm}%
    \setlength{\rightmargin}{\leftmargin}%
  }%
  \item\relax}
 {\endlist}
\begin{document}
\maketitle

\begin{abstract}
This paper deals with the stochastic modeling of a class of heterogeneous population   in a   random environment, called  {\em birth-death-swap}.  
In addition to  demographic events,  swap events, i.e. moves between subgroups, occur in the population.  Event intensities are  random functionals of the multi-type population. In the first part,  we show that the complexity of the problem is significantly reduced by  modeling  the  jumps measure of the population, described by a multivariate counting process. This  process  is defined as a  solution of a stochastic differential system with random coefficients, driven by  a  multivariate Poisson random measure.  The solution is obtained  under weak assumptions, 
by the thinning of a  strongly  dominating point process generated by the  same Poisson measure. This key construction relies on a general  strong comparison result, of independent interest. \\
The second part is dedicated to averaging results  when swap events are significantly more frequent than demographic events.  An important ingredient  is  the stable convergence, which is well-adapted  to   the  general random environment. The pathwise construction by domination yields  tightness results straightforwardly.  At the limit, the demographic intensity functionals are averaged against random kernels depending on swap events.  Finally, under a natural assumption, we show    the convergence of the  aggregated population to a  ``true'' birth-death process in random environment,  with non-linear intensity functionals.
\end{abstract}

{\it Key-words}:  heterogeneous population dynamics, random environment, point  processes,  SDEs driven by Poisson measures, strong comparison, averaging, aggregation, two timescales, stable convergence.\\

{\it MSC key-words: 60G55, 60G60,60F17,91D20,92D25.
}

\section{Introduction} 
This paper is motivated by the problem of modeling stochastic heterogeneous populations, in which individuals are characterized by a finite but potentially large number of characteristics.  
Such populations  are usually modelled by multi-type Markov birth-death processes (MBDPs), where the population evolution is determined by demographic events (births and deaths), occurring at rates  depending  only on the population state.   
 The aim of this paper is to  introduce  a more general class of  population dynamics, allowing for a more realistic description of the population.

On the  one hand, we  take into account the   moves of individuals between the different subgroups, called swap events. 
In human populations,  it  is fundamental to include  changes in the geographical or socio-economic composition  of the population   for understanding the evolution  of  demographic indicators (\cite{Dowd2014}). Composition changes can also  impact  the  evolution of biological or ecological systems.
For instance,  \cite{AUGER2000} study the deterministic evolution of communities living and migrating on a  fragmented habitat (see also \cite{PICHANCOURT2006}, \cite{MOSE2012}). In biology,  horizontal gene transfers in a stochastic population model   is studied in  \cite{billiard2016effect}.  For other examples,  see  \cite{Auger2013HawkesDove}, or  in  \cite{SONG2011} for cyber risk. 

On the other hand, we include a  time varying random environment in the model. This general theoretical  framework is motivated by the need to consider complex environments in many models.  For instance in human populations,  this includes the unpredictable reduction of mortality and/or fertilty over time, varying according to  each  individual's characteristics (\cite{CRIMMINGSPRESTONCOHEN2011}, \cite{ElKaroui2018},  \cite{KAAKAI2019}), or the influence of sudden shocks such as epidemics or economic crises,  as well as macroscopic variables (economic growth,  advances in  public health, pollution, climate change...). 

As a consequence, the demographic and swap event intensities are here  assumed to be  predictable   random  functionals $\mu^\gamma(\omega,t,Z_{t^-})$ of the population state $Z_t$,  with respect to a general filtration $(\G_t)$. This general  stochastic population dynamics is  called a {\em birth-death-swap (BDS)} population, a  terminology introduced by \cite{Huber2012}  for (Markov) particle systems.\\[1mm]
\indent In contrast with stochastic individual-based models (see e.g. \cite{FournierMeleard2004}, \cite{bansaye2015stochastic}), the  representation of BDS is not centered on the population itself, but rather on the vector process  counting the occurrences of  the different types of events, called  events  counting process.  The study of the BDS   is thus  reduced to  the study of its jump measure, described as  multivariate counting process $\bolN$ on a larger state space. Then, the BDS  population $Z$  is an affine function of $\bolN$, a  viewpoint is particularly well-suited to the presence of swap events.

  A number of standard results, such as the distributional approach,  are not adaptable  to this general framework. We  thus develop a pathwise approach, where the events counting process  is  defined  as a solution of a  stochastic differential equation (SDE) driven by  a multivariate Poisson random measure, called birth-death-swap SDE. 

 Multivariate counting processes which are solutions of  SDEs  with random coefficients, are studied independently in the preliminary Section \ref{SectionPreliminary}. The existence and uniqueness of solutions  is usually obtained by controlling expectations of the counting processes,  which requires the intensity to be dominated by a (deterministic) bounded linear function ((\cite{Massoulie1998}, \cite{Garcia06spatialbirth}, \cite{bansaye2015stochastic}).   Our  pathwise approach  differs significantly.   In Proposition \ref{comparison point process},  we 
show that if two solutions of SDEs driven by the same Poisson measure  have ordered intensity functionals,  then they are strongly ordered.   Given a well-defined dominating process, 
we obtain by thinning of this process the existence and uniqueness of all solutions of SDEs with smaller intensity functionals.

The construction by strong domination is central in the paper, and allows to relax usual assumptions of sublinear growth. As an application of Proposition \ref{comparison point process}, the solution of the  BDS-SDE is  obtained  in  Theorem 
\ref{ThBDSexistence}, first of all by  proving the existence of a multivariate counting  process, strongly dominating $\bolN$. 
\\[2mm]
\indent  In the second part of the paper, the BDS system is studied in the presence of two timescales, 
 when swap events occur on a faster timescale than  demographic 
events. Such  swap events  may be  fast  changes to the social and/or geographical population composition, migrations between different patches or changes of strategies. 
In this context, the study of the aggregated  population $Z^\natural= \sum_{i=1}^p Z^i$  is important,   in order to understand macro demographic indicators or to study   population viability.

Section \ref{SectionTwoTimescales} is dedicated to proving a general averaging result for the  demographic counting process (the process counting the 
different types of demographic events) and the aggregated population.  Due to  the general random environment, averaging results such as \cite{Kurtz1992} or \cite{Yin2012continuous} cannot be applied or extended to this  framework, since event intensities are  random functionals of the population.  In Theorem  \ref{ThIdentificationResult}, we show more particularly that at the limit, the intensities of demographic events are averaged against invariant measures of pure swap processes with ``frozen environment''. 

 The construction of the  BDS-SDE by strong domination yields straightforwardly tightness results for the demographic counting processes,  without needing additional uniform integrability assumptions.   The  main difficulty lies in  defining  the right probability spaces and objects on these spaces, together with the right type of convergence. 
 We overcome this difficulty by  strongly relying on the stable convergence of the concerned processes. This mode of convergence, which extends the convergence in distribution, is  particularly well-suited for identifying limits in  random environment. 
An important point is that  the family of two-timescales  BDS processes $(Z^\epsilon)_\epsilon$ is not tight, due to the ``explosion" of swap events. The idea is to introduce the product space $(\Omega \times\mathbb R^+, d\P\times e^{-s}\d s)$ equipped with the stable convergence, and to  consider these processes as simple random variables $\tilde Z^\epsilon(\omega,s)=Z^\epsilon_s(\omega)$ on this space. \\[1mm]
\indent Finally, these results are applied in Section  \ref{SectionWeakConvergence}   to obtain the convergence in distribution of the demographic counting process, under an additional assumption on swap events, using the construction by strong domination.  The aggregated population processes converge to a ``true ''  Birth-Death process in random environment with averaged intensities. In particular, using a toy model, we show how the heterogeneity of the population can generate  non-linearities in aggregated mortality rates, which is not taken into account in standard mortality models for human populations.\\[1mm]
\indent To summarize, in  the preliminary Section \ref{SectionPreliminary}   we prove a pathwise comparison result, for multivariate counting processes solution of SDEs driven by the same Poisson measure, and  its converse. The general model  and  its pathwise representation are presented in Section \ref{SectionBDSsystem}, where we  establish the existence and uniqueness of the BDS-SDE in Theorem \ref{ThBDSexistence}. 
The last two sections are dedicated to  the averaging results in the presence of two timescales.  The two timescale BDS system is  introduced at the beginning of Section \ref{SectionTwoTimescales}, which contains the main averaging results, after a brief remainder on the stable convergence.  These results are then applied in Section \ref{SectionWeakConvergence}.
\section{Preliminary results on  multivariate counting processes}\label{SectionPreliminary}${}$\\

 \noindent Pathwise  representations of spatial (Markov) birth death processes have been  considered by several authors (\cite{FournierMeleard2004}, \cite{Garcia06spatialbirth},\cite{GarciaKurtz2008},  \cite{bansaye2015stochastic}, \cite{Bezborodov2015}), based on the  realization of the population process as solution of a SDE driven by a Poisson measure. Our point of view for  birth-death-swap (BDS) populations  is  slightly different,  since we  represent the jumps measure of the population, defined as a multivariate counting process, rather than the population process itself. \\
We start by recalling  some useful properties of multivariate counting processes, and their pathwise representation by thinning of Poisson measures, and then prove two results  (Proposition \ref{comparison point process} and its converse Proposition \ref{PropReciproqueComparaison}), on the pathwise comparison of SDEs by coupling.  These results, of independent interest, are central for proving existence and averaging results of  following sections.

\subsection{Stochastic intensities}
The  probability space, denoted by $(\Omega, \G, \P)$, is  equipped with a general filtration  $(\G_t)$, with the usual assumptions of right-continuity and completeness. The  predictable $\sigma$-field generated by the $(\G_t)$-adapted, left-continuous,  processes is denoted $\cP(\G)$.\\

A $(\G_{t})$-counting process $N$ is an adapted  $\N$-valued càdlàg process, starting from $N_0=0$ and with jumps of size 1. $N$  is  said  to have the  (predictable) $(\G_{t})$-intensity $(\lambda_t)$ if  $(\lambda_t)$ is a non-negative,  $(\G_t)$-predictable process such that for any time $t$, $\int_0^t \lambda_s \d s <\infty,$  a.s.,  and $(N_t -\int_0^t\lambda_s \, ds)$ is a $(\G_t)$-local martingale (\cite{bremaud1981point}). Equivalently,  for all  nonnegative predictable processes $H$,
\begin{equation}
\E[\int_0^{+\infty} H_s \d N_s] = \E[\int_0^{+\infty} H_s \lambda_s \d s].
\end{equation}
Any predictable support condition for $N$ can be formulated using the intensity, since for any    $\Delta \in \mathcal P(\G)$
\begin{equation}
\label{EqTransfertSupportIntensity}
\int_0^t \ind_\Delta(s)\,\d N_s=0 \quad \Longleftrightarrow \quad \ind_\Delta(s)\lambda_s = 0, \> \>\d s \times \d \P\> a.s..
\end{equation}
The filtration $(\G_t)$ plays  a critical role.  In standard models, the information considered is usually the  minimal filtration  ${\cal F}_{t}^N= \sigma(N_s; s\leq t)$ generated by the past history of the counting process. Then, the intensity may only  be  deterministic functions of the past  $[N]_{t^-} =(N_s)_{s < t}$, i.e. $\lambda_t^= f(t,[N]_{t^-})$ of the counting  process.  The canonical framework is sometimes extended to the case where $(\G_t) = (\G_0 \otimes \F_t^N)$.  Then, $\lambda_t=f^\gamma (E_0, [N]_{t^-})$, with $E_0$  a  $\G_0$-environment variable. However,  this assumption is quite restrictive since  this means that the randomness of the environment  is completely known at $t=0$. 

Two counting processes with the same $(\G_t)$-intensity may not necessarily have the same distribution. When  $(\G_t) = (\G_0 \otimes \F_t^N)$, the property is true (\cite{jacod1975multivariate}),  but not in the general case. Thus, we adopt a stochastic differential equation approach to define  properly BDS processes, based on the thinning of Poisson measures. 
\paragraph{Multivariate counting processes} Let $\J$ be a finite set of cardinal $|\J|$.  A $(\G_t)$-multivariate  counting process $N=(N^\gamma)_{\gamma \in \J}$ is a vector of $|\J|$ counting processes with no common jumps, and its $(\G_t)$-intensity is the vector $(\lambda_t) =((\lambda_t^\gamma)_{\gamma \in \J})$  of the  coordinates' intensity.

\subsection{Thinning of Poisson measures}  
Let  $Q$ be a $(\G_t)$-Poisson measure on $\mathbb R^+\times \mathbb R^+$  of Lebesgue intensity measure $\d t\times \d\theta$.  Properties of $Q$  are defined in reference to the large filtration $(\G_t)$: in particular, $(Q(]s,t+s]\times ]0,K]))_{t\geq 0}$ is a Poisson process of intensity $(\lambda_t)=(Kt)$, independent of $(\G_s)$.

For any predictable set $\Delta \in \mathcal{P}(\G)\otimes \mathcal B(\mathbb R^+)$,  the restriction of the Poisson measure to $\Delta$,
$$Q^\Delta(\d t, \d\theta) = \indi_\Delta(t,\theta)Q(\d t, \d\theta),$$ is the random counting measure of intensity ${q^\Delta(\d t,\d\theta)=\indi_\Delta(t,\theta)\d t\d\theta}$.\\
When  $\Delta= \{(t,\theta) ;\, 0< \theta \leq \lambda_t\} $ with $(\lambda_t)$  a  given predictable intensity process, the marginal
\begin{equation*}
N_t=\int_0^t \int_{\mathbb R^+} Q^\Delta(\d t,\d \theta)   =\int_0^t \int_{\mathbb R^+}\indi_{\{0<\theta\leq \lambda_s\}}Q(\d s, \d\theta)=\int_0^t Q(\d s, ]0,\lambda_s]),
\end{equation*}
is the counting process of $(\G_t)$-intensity $(\lambda_t)$. $N$ is obtained by {\em thinning and projection} of the  Poisson measure $Q$. 

When the  $\lambda_t := \alpha(t,N_{t^-})$
 is  a predictable  functional $(\alpha(\omega,t,n))$ of the process, called intensity functional, the thinned process is  solution of an SDE driven by $Q$. \\
 
The same holds for multivariate counting processes $(N^\gamma)_{\gamma \in \J}$  of $(\G_t)$-multivariate intensity functional $\alpha(\omega,t,y) =(\alpha^\gamma(\omega,t,y))_{\gamma \in \J}$ depending on $y \in \N^{|\J|}$, and  the multivariate SDE is denoted by
\begin{equation}
\label{EqMultivarSDE}
\d N_t = Q(\d t, ]0,\alpha(t,N_{t^-})]), 
\end{equation}
where  $Q=(Q^\gamma)_{\gamma \in \J}$  is a multivariate Poisson measure (the measures $Q^\gamma$ are independent), and $ Q(\d t, ]0,\alpha(t,N_{t^-})])=  (Q^\gamma(\d t, ]0,\alpha^\gamma(t,N_{t^-})]))_{\gamma \in \J}$.   Throughout this paper,  a solution  is considered well-defined if it stays finite in finite time.

The variable $\omega$ is generally omitted (as usual), but is sometimes written explicitly to recall that we work with stochastic intensities.

\subsection{Pathwise comparison of SDEs driven by the same Poisson measure} 
The comparison of  counting processes with ordered intensity  processes has been the subject of several papers (see e.g. \cite{Preston1975spatial}, \cite{bhaskaran1986almost},  \cite{rolski1991stochastic},  \cite{Bezborodov2015}),  usually based on a distributional approach. The pathwise approach presented below,  based on the  realization of counting processes using Poisson measures,  is  actually better adapted to study this problem  in a general setting, and simplifies proofs significantly.

Let us first present the main ideas of the result  in the most simple case.
Take  $N_t^{i} = \int_0^t Q(\d s, ]0,\lambda^i_s])$ be  two counting processes obtained by thinning of the same Poisson measure $Q$, and with  ordered intensities  $\lambda^1_t \leq \lambda^2_t$ a.s.  Then  
$$N_t^{1} = \int_0^t Q(\d s, ]0,\lambda^1_s \wedge \lambda^2_s]) = \int_0^t  Q^{\Delta_2}(\d s, ]0,\lambda^1_s])$$
is also  obtained by  thinning of the measure $Q^{\Delta_2}$. It follows directly that for each $t\geq 0$, $N^1_t \leq N^2_t$ a.s. Bust most importantly, $N^2- N^1$ is also a counting process, i.e. all jumps of $N^1$ are jumps of $N^2$, which is the key element of the construction.\\
The application to SDEs driven by Poisson measures is not direct, since the natural order of random intensity functionals does not necessary imply an order on the  intensities processes. However, applying similar ideas yields the general result.
\paragraph{Strong order of counting processes and intensity functionals} A multivariate counting process  $N=(N^\gamma)_{\gamma \in \J}$ is said to be  strongly dominated by $N'=(N^{'\gamma})_{\gamma \in \J}$ ($N \prec N'$) (or strongly majorized in the terminology of \cite{JacodShiryaev1987limit})  if 
\begin{center}
$N - N'$ is a multivariate counting process,
\end{center}
or equivalently if all jumps of $N$ are jumps of $N'$. 
In particular, the intensity of $N$ is bounded by the intensity of $N'$ (if they exist).\\

For  $x, y \in \N^{|\J|}$,  we write $x\leq y$  if $x^\gamma\leq y^\gamma$, $\forall \gamma \in \J$.  Two predictable  intensity functionals $\alpha =(\alpha^\gamma)_{\gamma \in \J}$ and $\beta =(\beta^\gamma)_{\gamma \in \J}$ are said to be strongly ordered, $ \alpha \leq_s \beta$ if $ \forall \; t\geq 0, \;   \gamma \in \J,$ and $ x\in \N^{|\J|}$,
\begin{equation}\label{stronglyordered}
\sup_{y \leq x} \alpha^\gamma(t,y)\leq \beta^\gamma(t,x),  \quad  \; \text{a.s.}
\end{equation}
 \paragraph{}Under this strong order on intensity functionals, the strong domination holds for multivariate counting processes solutions of SDEs driven by the same Poisson measure.
\begin{proposition}\label{comparison point process}
Let $Q = (Q^\gamma)_{\gamma \in \J}$ be a multivariate Poisson measure, and $ \beta$  an  intensity functional such that there exists a unique well-defined solution $N^\beta $ of: 
\begin{equation} \label{Dominant process}
 \d N^{\beta}_t=  Q(\d t,]0,\beta(t, N^{\beta}_{t-})]). 
\end{equation}
Then, for any intensity functional $\alpha$ such that $\alpha \leq_s  \beta$, there exists a unique solution of the equation
\begin{equation}\label{Domine process}
\d N^{\alpha}_t= {Q}\big(\d t,]0, \alpha(t, N^{\alpha}_{t-})]\big),
\end{equation}
which  is strongly dominated by 
$N^\beta$, i.e. $N^\beta - N^\alpha$ is a multivariate counting process. 
 \end{proposition}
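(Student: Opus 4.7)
The plan is to construct $\bar Y^\alpha$ by induction along the jump times of the dominating process $\bar Y^\beta$, exploiting the fact that strong ordering will rule out any jump of $\bar Y^\alpha$ outside those of $\bar Y^\beta$. Since $\bar Y^\beta$ is non-explosive, I would enumerate its jump times as $(T_n)_{n\geq 1}$ with $T_n \to \infty$ a.s., recording at each $T_n$ the jumping coordinate $\imath_n$ and the $\theta$-coordinate $\theta_n$ of the corresponding atom of $Q^{\imath_n}$, which automatically satisfies $\theta_n \leq \beta^{\imath_n}(T_n, \bar Y^\beta_{T_n-})$. Between consecutive $T_n$'s I would let $\bar Y^\alpha$ stay constant, and at $T_n$ I would set
\[
\bar Y^\alpha_{T_n} \;=\; \bar Y^\alpha_{T_n-} \;+\; \bar e_{\imath_n}\,\indi_{\{\theta_n\,\leq\, \alpha^{\imath_n}(T_n,\, \bar Y^\alpha_{T_n-})\}}.
\]
All ingredients are $(\G_{T_n})$-measurable, so the construction is $(\G_t)$-adapted.

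Simultaneously with the construction, I would prove by induction on $n$ that $\bar Y^\alpha_{T_n} \leq \bar Y^\beta_{T_n}$ componentwise: between $T_{n-1}$ and $T_n$ neither process moves, and at $T_n$ either $\bar Y^\alpha$ stays put (while $\bar Y^\beta$ advances by $\bar e_{\imath_n}$) or both gain $\bar e_{\imath_n}$, so the pointwise inequality is preserved. To check that $\bar Y^\alpha$ solves \eqref{Domine process}, I would verify both inclusions of jump sets. By construction, the jumps of $\bar Y^\alpha$ correspond to the atoms $(T_n,\theta_n)$ retained by the condition $\theta_n \leq \alpha^{\imath_n}(T_n, \bar Y^\alpha_{T_n-})$. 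Conversely, for any atom $(t,\theta)$ of $Q^\imath$ satisfying $\theta \leq \alpha^\imath(t, \bar Y^\alpha_{t-})$, the strong ordering \eqref{stronglyordered} applied with $\bar y = \bar Y^\alpha_{t-}$ and $\bar x = \bar Y^\beta_{t-}$ gives $\theta \leq \beta^\imath(t, \bar Y^\beta_{t-})$, so $(t,\imath)$ must coincide with some $(T_n, \imath_n)$; hence the constructed process produces exactly the jumps prescribed by the SDE.

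For uniqueness, I would take any candidate solution $\tilde Y^\alpha$ and rerun the same induction on it: the same argument forces $\tilde Y^\alpha_t \leq \bar Y^\beta_t$, confines the jumps of $\tilde Y^\alpha$ to the sequence $(T_n)$, and fixes the jump decision at each $T_n$ by $\theta_n \leq \alpha^{\imath_n}(T_n, \tilde Y^\alpha_{T_n-})$; this yields $\tilde Y^\alpha = \bar Y^\alpha$. Strong domination $\bar Y^\alpha \prec \bar Y^\beta$ is then immediate, since every jump of $\bar Y^\alpha$ coincides in time and component with one of $\bar Y^\beta$. The main delicacy I anticipate is the apparent circularity between the pointwise inequality $\bar Y^\alpha_{t-} \leq \bar Y^\beta_{t-}$ and the intensity bound $\alpha^\imath(t, \bar Y^\alpha_{t-}) \leq \beta^\imath(t, \bar Y^\beta_{t-})$: one needs the former to deduce the latter, yet the former is secured by ruling out jumps of $\bar Y^\alpha$ outside $(T_n)$, which relies on the latter. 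The step-by-step induction along the locally finite sequence $(T_n)$ is precisely what breaks this circle, and is also the reason the supremum over $\{\bar y \leq \bar x\}$ in \eqref{stronglyordered} is needed in place of a mere pointwise ordering of intensity functionals.
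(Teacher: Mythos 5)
Your proof is correct and follows essentially the same route as the paper: both construct $\bar Y^\alpha$ by recursive thinning along the (locally finite, increasing) jump times $(T_n,\Theta_n)$ of the dominating process, use the strong ordering plus the inductively maintained inequality $\bar Y^\alpha \leq \bar Y^\beta$ to check that the constructed process solves \eqref{Domine process}, and prove uniqueness by showing any solution's jumps are forced to lie among the $(T_n)$ with the same acceptance rule. Your explicit discussion of how the induction breaks the apparent circularity between the pointwise domination and the intensity bound is a nice clarification of what the paper does implicitly.
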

\begin{remark}
Proposition \ref{comparison point process} can be extended straightforwardly to intensity functionals $\alpha(\omega,t, [N]^\alpha_{t^-})$ and $\alpha(\omega,t, [N]^\beta_{t^-})$  depending on the past of the counting processes. 
 In this case, the strong order of intensity functionals should be modified as follow: $\alpha\leq_s \beta$ if $\sup_{[y]_{t^-} \prec [x]_{t^-}} \alpha^\gamma(t,[y]_{t^-})\leq \beta^\gamma(t,[x]_{t^-})$ for all counting paths $[x]$, with  $[y] \prec [x]$ if all jump times of $[y]$ are jump times of $[x]$.\\
If $\alpha$ is uniformly bounded by a constant, then we  can retrieve directly  the result of  Theorem 1 in \cite{Massoulie1998},  by strong domination with a simple Poisson process. 
\end{remark}
 \begin{proof}
 We start with the one dimensional case.  Jumps times of the Poisson measure $Q$ cannot be enumerated increasingly, so the main idea is to  replace the driving Poisson measure $Q$  by the  random measure $ Q^{\Delta_\beta}(\d t;\d \theta)=\indi_{\Delta_\beta}(t,\theta) Q(\d t, \d \theta)$, with $\Delta_\beta = \{(t,\theta) ; 0 < \theta \leq  \beta(t, N^{\beta}_{t-} )\}$, whose  projection on the first coordinate  is $ N^\beta$. \\
Since $ N^\beta$ is non-exploding, the increasing sequence $(T_j)$  of its jump times  verifies $ \lim T_j = +\infty$, and
$ Q^{\Delta_\beta}$ can be characterized by the sequence $(T_j, \Theta_j )_{j\geq 1 }$, where $\Theta_j$ is the mark of $Q$ associated with $T_j$. \\
\rma {\em Existence:} We consider  a slightly different version of Equation \eqref{Domine process},  driven by $Q^{\Delta_\beta}$:
 \begin{equation}\label{Modifiedequation}
\d\tilde N^{\alpha}_t= Q^{\Delta_\beta}(\d t, ]0, \alpha(t, \tilde N^{\alpha}_{t-} )]) = Q(\d t,]0,\alpha(t, \tilde N^{\alpha}_{t-} )\wedge  \beta(t, N^{\beta}_{t-} )]),
\end{equation}
Thanks to the increasing enumeration of jump times of  $Q^{\Delta_\beta}$,
the unique solution  of (\ref{Modifiedequation}) can be built recursively as:\\[1mm]
\centerline{$\tilde{N}_t^{\alpha} = \sum_{j= 1}^\infty \indi_{\{T_j\leq t\}}  \indi_{\{\Theta_j \leq \alpha(T_j, \tilde Y^{\alpha}_{T_{j-1}})\}}$.}\\[1mm]
Since $\tilde{N}^\alpha$ is obtained by thinning of $Q^{\Delta_\beta}$, the counting process is strongly dominated by $N^\beta$ by construction. Then, since $\alpha \leq_s  \beta$, $\alpha(t, \tilde N^{\alpha}_{t} )\leq   \beta(t,  N^{\beta}_{t} )$, and thus $\tilde N^{\alpha}$ is solution of \eqref{Domine process}, which achieves to prove existence. \\
\rmc {\em Uniqueness:} It remains to prove that any solution of \eqref{Domine process} is solution of \eqref{Modifiedequation}.  Let  $ N^\alpha$ be a solution of \eqref{Domine process} and $T^\alpha_1$ its first jump time, associated with 
the mark $\Theta_1$ of $Q$. By definition of the thinning procedure, $\Theta_1 \leq \alpha(T_1^\alpha,0) $. By assumption, $\alpha(T_1^\alpha,0) \leq \beta(T_1^\alpha,0)$ and thus $T_1^\alpha$ is also a jump of $ N^\beta$. By iterating this argument, we obtain that all jump times of $ N^\alpha$ are jump times of  $ N^\beta$, or equivalently that $ N^\alpha \prec  N^\beta$. In particular, $\alpha(t,N^\alpha_{t^-}) \leq \beta(t,N^\beta_{t^-})$ a.s, and thus $N^\alpha$ is also the unique solution of \eqref{Modifiedequation}. \\
The proof can be generalized to the multivariate case, by noticing  that the vector measure $Q$ can be seen as a marked measure $\bar Q(\d t , \d \theta, \d \gamma)$,  and  $ N^\beta$ and $ N^\alpha$ as marked processes (marks being the components of the vector  which jumps).
\end{proof}
The next result is very useful since the sequence of jump times of $N^\beta$ is a localizing sequence for all local martingales $(N^\alpha_t - \int_0^t \alpha(s,N^\alpha_s) \d s)$.
\begin{lemma}
\label{CorFamilyStrongDomination}
Under the assumptions and notations of Proposition \ref{comparison point process}, let $(S_n)$ be the jumps sequence  of $N^\beta$. Then $S_n \rightarrow \infty$, and 
For all $\alpha \leq_s \beta$ and $n\geq 0$, 
$$ N^\alpha_{t\wedge S_n} - \int_0^{t\wedge S_n} \alpha(s,N^\alpha_s) \d s \text{ is a } (\G_t)-\text{martingale}.$$
\end{lemma}
\begin{proof}  Let  $|y| = \sup_{\gamma \in \J} |y^\gamma|$. Since all jump times of $N^\alpha$ are jump times of $N^\beta$, $\vert N^{\alpha}_{t \wedge S_n} \vert  \leq n$. This yields
$$\E[\sup_{0\leq s \leq t}| N^\alpha_{t\wedge S_n} - \int_0^{t\wedge S_n} \alpha(s,N^\alpha_s) \d s|]\leq 2\E[|N^\alpha_{t\wedge S_n}|]  \leq 2n,$$
which achieves the proof. 
\end{proof}
\subsection{Converse problem} The next result is in some sense a converse to  Proposition \ref{comparison point process}: 
given two multivariate counting processes  $N \prec N'$,  $N$ can be written as the solution of a thinning equation driven by a marked measure on $\mathbb R^+\times [0,1]$ with same jump times as $N'$.  Several similar results  appear in the literature. Theorem 2 of \cite{rolski1991stochastic}  is a  result close to Proposition \ref{PropReciproqueComparaison}, although from a distributional viewpoint, which yields a rather  long proof.  See also \cite{jacod79calcul,el1977representation,Massoulie1998}  for results when there is no domination assumption (the driving measure is a Poisson measure). \\[2mm]
\indent Only the one dimensional case  is presented here for conciseness, as proofs are the same in the multivariate case.  We consider two counting processes $N \prec N'$, with respective jump times $(T_n)$ and $(T_n')$, and  $(\G_t)$-predictable intensities $(\lambda_t)$ and $(\lambda'_t)$ (where $\lambda'$ is positive). The  thinning ratio is denoted by:
$$\phi_t := \dfrac{\lambda_t}{\lambda'_t}$$
Since, $N\prec N'$,  $\phi_t \leq 1$, a.s, and $N^c  = N'- N$ is a counting process, with jump times  denoted by $(T_n^{c})_{n\geq 1}$. The main idea is to accept a jump time $T'_n$ of $N'$ as a jump time of  $N$ with probability $\phi_{T'_n}$. 

We assume that there are two  independent sequences $(U_n)$ and $(V_n)$ of i.i.d uniform variables on $[0,1]$,  independent of $\G_\infty$.
Note that the probability space  can always be extended to satisfy this assumption.

 To each jump time $T_n$ (resp $T_n^{c}$) is attached the random variable $U_n$ (resp $V_n$),  defining
the random counting measures  on $\mathbb{R}^+ \times [0,1]$:
\begin{equation*}
Q^N(\d t, \d u) =\sum_{n \geq 0} \delta_{T_n} (\d t) \delta_{U_n}(\d u), \quad Q^{N^c}(\d t, \d u) =\sum_{n \geq 0} \delta_{T_n^{c}} (\d t) \delta_{V_n}(\d u).
\end{equation*}
The image  of $Q^N$ (resp $Q^{N^c}$) under the  random transformation $(t,u) \mapsto (t, \phi_t u)$ (resp$ (t,u) \mapsto (t, \phi_t + (1-\phi_t)u))$ 
  are defined by 
\begin{equation*}
\widehat Q^N(f) = \int_{\mathbb R^+ \times  [0,1]} f(s,u\phi_s) Q^N(\d s, \d u), \quad \widehat Q^{N^c}(f) = \int_{\mathbb R^+ \times  [0,1]} f(s,\phi_s + (1- \phi_s)u) Q^{N^c}(\d s, \d u).
\end{equation*}
Finally, let $\displaystyle \hat Q' = \widehat  Q^N +\widehat  Q^{N^c} := \sum_{n\in \N} \delta_{T_n'}\delta_{{U}_n'}$.
\begin{figure}[H]
\centering
\includegraphics[scale=0.4]{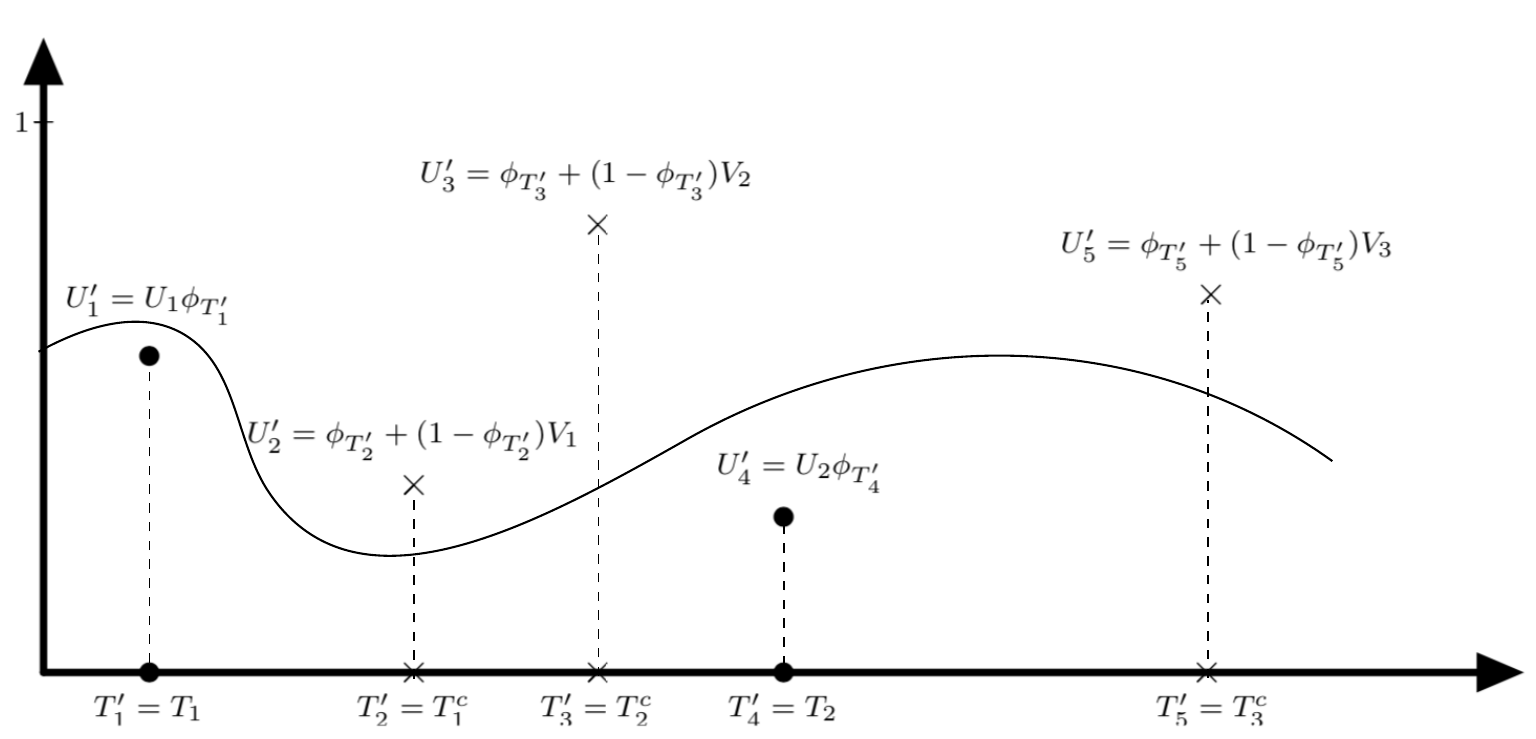}
\vspace{-0.3cm}
\caption{\textit{ Representation of $\hat Q'$.}}
\end{figure}
Proofs of Proposition \ref{PropReciproqueComparaison} and Corollary \ref{UniquenessDomination} are in Appendix \ref{AppendixProof}. 
\begin{proposition}  
\label{PropReciproqueComparaison}
\rmi Using the previous notations, $N$ is the solution of the following thinning equation driven by $\hat Q'$:
\begin{equation}
\label{eqreciproquecomparaison}
\d N_t = \hat Q'(\d t, ]0,\phi_t]) =  \hat Q'(\d t , ]0, \frac{\lambda_t}{\lambda'_t}]). 
\end{equation}
\rmii Let $\F_t ^{U,V} = \sigma(U_1, ... , U_{N_t}, V_1, ..., V_{N^c_t})$ be the $\sigma$-algebra keeping track of the  marks  attached to the  jump times before $t$. 
 Then,   $\hat Q'$ has the  $(\G_t \vee \F_t^{U,V})$-intensity   $\lambda'_s \d s \otimes \d u$.\\
 In particular,  $({U}_n')_n$ is a sequence of  i.i.d uniform  random variables on $[0,1]$, such that for all $n\geq 1$, $U_n'$ is independent of $T'_n$ and $\G_{T_n^{'-}}$.
\end{proposition}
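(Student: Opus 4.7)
The plan is to treat (i) as an immediate consequence of the construction, and to establish (ii) by conditioning on $\G_\infty$ to exploit the independence of the auxiliary uniforms $(U_n),(V_n)$. For (i), by construction the mark $\tilde U_n$ attached to the $n$-th jump $T_n$ of $Y$ equals $\phi_{T_n}U_n\in[0,\phi_{T_n}]$ when $T_n$ is a jump of $X$, and $\phi_{T_n}+(1-\phi_{T_n})V_n\in[\phi_{T_n},1]$ when $T_n$ is a jump of $X^c$. Since $U_n,V_n$ are continuously distributed and the $(\G_t)$-intensity $(1-\phi_t)\lambda^Y_t$ of $X^c$ vanishes on $\{\phi_t=1\}$ (ruling out the degenerate case $\phi_{T_n^{X^c}}=1$), the inequality $\tilde U_n\leq \phi_{T_n}$ holds almost surely if and only if $T_n$ is a jump of $X$. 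Hence $\int_0^t \hat Q^Y(\d s,]0,\phi_s])=\sum_n \indi_{\{T_n\leq t,\,\tilde U_n\leq \phi_{T_n}\}}=X_t$, which is exactly \eqref{eqreciproquecomparaison}.

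For (ii), I would test $\hat Q^Y$ against a bounded nonnegative $(\G_t\vee \F_t^{U,V})$-predictable integrand $f(t,u)$. Writing $\int f\,\d\hat Q^Y$ as two sums, one over jumps of $X$ and one over jumps of $X^c$, and conditioning on $\G_\infty$, the independence and uniform law of $(U_n),(V_n)$ reduce each sum to an integral against $\d X_t$ (resp.\ $\d X^c_t$) of the conditional mean $\phi_t^{-1}\!\int_0^{\phi_t}f(t,u)\,\d u$ (resp.\ $(1-\phi_t)^{-1}\!\int_{\phi_t}^1 f(t,u)\,\d u$). Taking expectations and substituting the $(\G_t)$-compensators $\phi_t\lambda^Y_t\,\d t$ and $(1-\phi_t)\lambda^Y_t\,\d t$ of $X$ and $X^c$ collapses both terms into $\E\bigl[\int_0^\infty\!\int_0^1 f(t,u)\lambda^Y_t\,\d t\,\d u\bigr]$, which identifies the $(\G_t\vee \F_t^{U,V})$-predictable intensity of $\hat Q^Y$ as $\lambda^Y_t\,\d t\otimes \d u$.

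The same identity, specialized to $f(t,u)=H_t g(u)$ with $H$ predictable and $g$ bounded measurable, shows that conditionally on $\G_{T_n^-}$ the mark $\tilde U_n$ follows the $\phi_{T_n}$-mixture of the uniform on $[0,\phi_{T_n}]$ with the uniform on $[\phi_{T_n},1]$, i.e.\ the uniform distribution on $[0,1]$; since the earlier marks $\tilde U_1,\dots,\tilde U_{n-1}$ depend only on $(U_k,V_k)_{k<n}$, this gives both the i.i.d.\ claim and the independence $\tilde U_n\indep(T_n,\G_{T_n^-})$. The main subtlety is the enlargement of filtration: $\phi$ and $\lambda^Y$ must remain $(\G_t\vee \F_t^{U,V})$-predictable and $X,X^c$ must keep their original compensators under the enlargement. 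This holds because the added information is indexed only by jumps of $Y$, which are totally inaccessible under $(\G_t)$ (as jump times of counting processes with continuous compensators), so the $(\G_t)$-predictable $\sigma$-field is preserved; a routine localization along the jumps of $Y$ then supplies the integrability needed to pass from the test-function identity to the formal intensity claim.
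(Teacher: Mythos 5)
Your proof is correct and follows essentially the same route as the paper's: part (i) reads off the identity $\indi_{\{u\leq\phi_s\}}\hat Q^Y=\hat Q^X$ from the construction (your extra care with the degenerate case $\phi_t=1$ at jumps of $X^c$ is a welcome refinement), and part (ii) is the same test-function computation splitting $\hat Q^Y$ into its $X$- and $X^c$-parts and averaging out the uniforms, followed by the same specialization to product integrands for the i.i.d./independence claim. The only cosmetic difference is that you justify the preservation of compensators under the enlargement by total inaccessibility of the jump times, whereas the operative reason (used implicitly by the paper via Br\'emaud) is that the marks $(U_n),(V_n)$ are i.i.d.\ and independent of $\G_\infty$; this does not affect the validity of the argument.
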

Corollary \ref{UniquenessDomination} below  shows that a sufficient condition for two processes with the same intensity functional to have the same distribution 
 is to be  strongly dominated by the same process. 
\begin{corollary}
\label{UniquenessDomination}
Let $N'$ be a multivariate counting process of $(\G_t)$ intensity $(\lambda'_t)$, and $\alpha$ an intensity functional, defined on the initial  probability space  $(\Omega, (\G_t), \P)$. \\
Let $N^i$, $i=1,2$ be two multivariate counting processes  defined on  extensions  $((\Omega \times  \Omega_0, (\bar \G_t), \R_i)$ of  $(\Omega, (\G_t), \P)$.\\
If $N^1$ and  $N^2$ have both the intensity functional $\alpha$ and  are strongly dominated by $N'$,  then they have the same distribution. 
\end{corollary}
%
\section{Birth-Death-Swap systems} \label{SectionBDSsystem}${}$\\

\noindent We can now introduce the Birth-Death-Swap (BDS)  model. The  population is structured in  $p$ subgroups, composed of individuals sharing the same characteristics. 
Each individual is  characterized by a  number of discrete attributes such as the  living area/patch, eating habits, strategy,  level of income...  This can  imply a large number $p$ of subgroups, 
which  favors moves between subgroups. 
The BDS population process is the vector process $(Z_t)= ((Z^1_t,..,Z^p_t))$ of the number of of individuals in the different subgroups. The population composition is modified by demographic events, but also  by moves/internal migration between subgroups, called swaps events.  We assume that  two events cannot occur at the same date.

Any BDS population process can be written as a linear function of the multivariate counting process $\bolN$, counting the different types of events occuring in the population.  This  process, defined on a larger space, is the building block for defining the BDS system in Section \ref{BDSSDE}. In particular, any BDS population can be defined from an initial population and a multivariate counting process $\bolN$, given that $\bolN$ verifies a necessary and sufficient support condition.
\subsection{General Setup}
The state space of  BDS population processes is  $\N^p$,  equipped with $(\bolei)_{i=1..p}$,  where ${\bolei = (0,\cdots,0,1_{i},0)\in \N^p}$. For any  vector $x$ of arbitrary size $n$, 
the sum of its coordinates  is denoted by $x^\natural=\sum_{i=1}^n x^i.$
\subsubsection{Events description}  
A swap event from a nonempty subgroup $i$ to $j$  modifies a population $z\in \N^p$ into $z' = z-e_i+e_j$, while a  birth in $j$  yields  the transformation   $z\to z+e_j$, and a   death in $i$ yields   $z\to z-e_i$ (if $z^i \neq 0$).  \\
Births (resp. deaths) may be considered as swap events from (resp. to) a fictitious  subgroup labeled by $b$ (resp. $d$),  with  $e_{b}= e_{d}=(0,\cdots ,0)$. 

The family of events is indexed by the  set $\J = \J^s \cup  \J^\dem $ where $\J^s$ is the set of the $p(p-1)$ swap events types, and $\J^\dem =\J^b \cup \J^d$  is the set of the $2p$ demographic (birth and death) events types:
\begin{equation*}
\J^s =\{\kappa= (i, j);  1 \leq i,j \leq p , i\neq j\},\quad \J^b=\{(b, j) ;\,  1 \leq j \leq p\},\quad \J^d = \{(i, d) ;\, 1 \leq i \leq p\}. 
\end{equation*}
In short, the impact on the population of an event  of type $\gamma=(\alpha,\beta) \in  \J$   is:
\begin{equation} \label{phi}
\phi(\alpha,\beta)= e_{\beta}- e_{\alpha} \in \mathbb Z^p.
\end{equation}
\subsubsection{Space of counting vectors} \label{ssb:Eventnotation} 
We are interested in counting  events  occurring in the population.  The new state space, called  space of counting vectors, is   the space of $\N^{p(p+1)}$ vectors $\bolnu=(\nu^\gamma)_{\gamma \in \J}$
 indexed by $\J$. When  swap events are distinguished from demographic events, we write $\bolnu =(\bolnu^s, \bolnu^{\dem})$, where $\bolnu^s=(\nu^{\kappa})_{\kappa \in \J^s}$  and $\bolnu^{\dem} = (\nu^\gamma)_{\gamma \in \J^\dem}=(\bolnu^{b},\bolnu^d)$, with  $\bolnu^b = (\nu^\gamma)_{\gamma \in \J^b}$, $\bolnu^d = (\nu^\gamma)_{\gamma \in \J^d}$. The same notation may be used for $\phi$. The operator $\odot$ is defined by:
 \begin{equation}
  \label{matrixproduct}
 \begin{cases}
 \phi  \odot \bolnu=  \sum_{\gamma\in \J} \phi(\gamma)\nu^\gamma,\\
\phi \odot \bolnu=\phi^s \odot \bolnu^s +\bolnu^{b}-\bolnu^d.
\end{cases}
 \end{equation}
\subsubsection{Events counting process of the BDS population} 
Let $Z$ be a BDS population process. 
If  an event of type $\gamma \in \J$ occurs  at time $t$ then $\Delta Z_t=Z_t-Z_{t-} = \phi(\gamma)$, and thus
\begin{equation*}
Z_t - Z_0= \sum\limits_{0<s \leq t}\Delta Z_s = \sum\limits_{0<s \leq t} 
\sum\limits_{\gamma\in \J} \indi_{\{ \Delta Z_s=\phi(\gamma)\}}\,\phi(\gamma)
\end{equation*}
The previous equation can be rewritten as 
\begin{eqnarray}
 \label{jump counting} 
 Z_t= Z_0+\sum\limits_{\gamma\in \J}\phi(\gamma)N^\gamma_t, \quad \quad 
N^\gamma_t = \sum_{0 < s\leq t} \indi_{\{\Delta Z_s=\phi(\gamma)\}},
\end{eqnarray}
where  $N^\gamma$ counts the number of events of type $\gamma$ that occurred in the population.\\
The multivariate counting process $\bolN=(N^\gamma)_{\gamma \in \J}$, called the {\em events counting process}. Taking values in $\N^{|\J|} = \N^{p(p+1)}$, this process is related to $Z$ by the affine relation, 
 \begin{equation}
 \label{EqAffine}
  Z = Z_0 +  \phi \odot \bolN.
 \end{equation}
 An equivalent point of view is to consider the jumps measure of the population process, 
$ J(\d t,\d\gamma)=\sum\limits_{s>0 } \sum\limits_{\gamma'\in \J} \indi_{\{ \Delta Z_s=\phi(\gamma')\}}\delta_{\gamma'}(\d\gamma) \delta_s(\d t)$ . \\

The evolution of the total size of the population, called the {\em aggregated process}, is 
 $Z^\natural= \sum_{i=1}^p Z^i = Z_0^\natural + N^{b, \natural}-N^{d,\natural}.$ 
Note that $Z^\natural$ only depends on demographic events since swap events don't change the size of the population.
\subsection{Birth-death-swap (BDS) SDE} 
\label{BDSSDE}
By the affine relation \eqref{EqAffine}, a BDS population $Z$ can thus be defined from a multivariate counting process $N$. Our main assumption concerns the  $(\G_t)$-intensity of the different events occurring in the population, assumed to be  depending on the population via a multivariate $(\G_t)$-predictable  functional $\bolmu( t,Z_{t^-})=\bolmu(t,Z_0+\phi \odot N_{t^-})$. An additional support condition on $\mu$ is needed to ensure that no death or swap events  can occur in an empty subgroup.   \\
Results of this Section can be extended straightforwardly to include dependence in the past of the population process.   

\begin{definition}[BDS intensity functional]
\label{DefBDSintensity}
A  Birth-death-swap intensity functional $\bolmu(t,z)=(\mu^\gamma(t, z))_{\gamma \in \J}$ is a non-negative vector of $(\G_t)$-predictable  functionals depending on  $z\in \N^p$,  satisfying the following support condition:
  \begin{equation} \label{pop intensity condition}
 \mu^{(i,\beta)}(t,z)\ind_{\{z^i=0\}}\equiv 0,  \quad \d t\otimes \d\P\,a.s, \; \forall  \; (i,\beta) \in \J^d\cup \J^s. 
\end{equation}
\end{definition}
By \eqref{EqTransfertSupportIntensity},  if $\bolmu$ is a BDS intensity functional  then Assumption \eqref{pop intensity condition} is equivalent to the following support  constraint  for $\bolN$:
\begin{equation*}
\forall (i,\beta) \in \J^d \cup \J^s, \quad \int_0^t \indi_{\{Z^i_{s-}=0\}}dN^{i, \beta}_s=0,\quad  \P\text{-a.s.}. 
 \end{equation*}
This  ensures   that the process $Z= Z_0 + \phi \odot \bolN$   is a well defined population, i.e. $Z^i \geq 0$, for each subgroup $i$. The BDS system $(Z_0, N,Z)$ is then  defined  from an events counting process $N$, solution of the  SDE below.
\begin{definition}[BDS system]\label{DefBDSSystem}
A BDS system of  (BDS) intensity functional $\bolmu$ and driven by $(\G_t)$- Poisson measure $\bolQ=(Q^\gamma)_{\gamma\in \J}$  is a triplet $(Z_0,\bolN, Z)$ with  $Z_0 \in \G_0$, such that  $\bolN$ is a well-defined 
solution of the multivariate SDE:
\begin{equation}
\label{ThinningGeneral}
\d \bolN_t= \bolQ(\d t,]0,\bolmu(t, Z_0 + \phi \odot \bolN_{t^-})]), \quad \text{and }   Z_t = Z_0 + \phi \odot \bolN_{t}.
\end{equation}
$Z$ is a well-defined BDS population process  of initial population $Z_0$ and events counting process $\bolN$.
\end{definition}
We draw the reader's attention to the fact that   the stochastic intensity functional $\mu$ is not assumed to be independent of  the Poisson measure $Q$. For instance, $\mu$ can be defined as  $\mu(t,z) =f(Y_{t^-},z)$,  where $(Y_t)$ is also the solution of an SDE driven by $Q$. \\
In particular, two BDS systems with the  same intensity functional can have different distributions. 
For instance, if   $\mu(t,z) =f(Y_{t^-},z)$ with  $(Y_t)$ the solution of an SDE driven by $Q$,  and if $Q'$ is a multivariate Poisson measure independent of $Q$, then the BDS system of intensity functional $\mu$ and driven by $Q$ does not have the same distribution than the BDS system of intensity functional $\mu$,  driven by $Q'$.

\paragraph{Examples} The description of BDS populations differs  from that of  Individual-based models. Here, the population is observed at an intermediary scale.  
In particular, $\mu^\gamma(t,Z_{t^-})$ is the intensity corresponding to the occurrence of the event of type $\gamma$ in {\em all} the population, not to be confused with the  individual rate at which an event an occur to  an individual.  For instance,  $\mu^{(i,d)}(t,Z_{t^-})\d t$ is  the expected number of deaths in subgroup $i$  occurring in the small  interval $]t,t+dt]$, conditionally to $\G_{t^-}$.\\[1mm]
\rma {\em Linear intensities} When the functionals  $\mu^\gamma$ depend linearly on the population,  a direct interpretation in terms of individual rates can be given. For instance, when  $\mu^{(i,d)}( t,z) = d^i_tz^i$, all individuals in subgroup $i$  have the stochastic death rate $d^i_t$.  If $\mu^{(i,j)}(t,z) =k^{ij}_t z^i$, an individual in subgroup $i$ can move to subgroup $j$ with stochastic rate $k^{ij}_t$. \\
When the intensity of a birth event $(b,j)$ is $\mu^{(b,j)}(t,z) = b^j_t z^j$, individuals in subgroup $j$ give birth to an individual of same characteristics at rate $b^j_t$. Mutations can be included, by taking $\mu^{(b,j)}(t,z) = \sum_{i=1}^p b^{i}_t  m_t( i,j) z^i$, where $m_t(i,j)$ is the random probability for a individual born at time $t$ from a parent in subgroup $i$ to be in subgroup $j$. A stochastic intensity $\lambda_t$ can also be added, in order to model the entry of immigrants at rate $\lambda_t$.\\[1mm]
\noindent \rmb {\em Deterministic intensity function}  If $(Z_0,\bolN, Z)$ is a BDS system, for any $f\in \cal C_b(\N^p)$,  $f(Z_t) = f(Z_0) + \sum_{\gamma \in \J} \int_0^t (f(Z_{s^-} + \phi(\gamma))-f(Z_{s^-}))\d N_s^\gamma$, and
\begin{equation*}
f(Z_t) - f(Z_0) - \sum_{\gamma \in \J} \int_0^t  (f(Z_s + \phi(\gamma))-f(Z_s))\mu^\gamma(s,Z_s)\d s \text{ is a }(\G_t)\text{-local martingale}.
\end{equation*}
When $\bolmu$ is a deterministic function $\bolmu(z)$,  $Z$ is  a Continuous Time Markov Chain (CTMC),  solution of the classical martingale problem.

\subsection{Solutions of  the Birth-Death-Swap SDE by strong domination} 
The existence of a  well-defined  solution to the BDS-SDE \eqref{ThinningGeneral}  imposes constraints on the coefficients of the SDE.  
In many papers, intensity functionals are assumed to be uniformly bounded  or to have sublinear growth  (\cite{Massoulie1998}), \cite{Garcia06spatialbirth}, \cite{bansaye2015stochastic}, \cite{Bezborodov2015}). The latter assumption is sometimes known in the one dimensional case as the Jacobsen condition  (\cite{jacobsen2012statistical}). The sublinear growth assumption is critical since proofs are based on controlling expectations of the counting processes,  which requires the intensity to be dominated by a (deterministic) linear function.

Our approach differs, since  the existence and uniqueness of the BDS-SDE is obtained by strong domination, by applying  Proposition \ref{comparison point process}. This pathwise approach by strong domination is central in the remainder of the paper, and allows us to relax usual assumptions of sublinear growth. By Proposition \ref{comparison point process}, the proof is reduced to proving the existence of a dominating process $G$, whose intensity functional dominates $\bolmu$ (in the sense of \eqref{stronglyordered}). 

\subsubsection{Birth intensity functional domination assumption}
The birth intensity functional has to  controlled, in order  for the population size to stay finite  in finite time. 
A famous non-explosion condition  for  one-dimensional   Markov Birth  processes is the Feller condition for non linear birth intensity functions $K g$, where  $\sum1/g(n) =+\infty$.  The generalization to the multivariate case (see e.g. Proposition 10.21 in \cite{kallenberg2006foundations}) is straightforward when the birth intensity functions $(Kg_j)_{j=1..p}$ only depends of the size of the population $z^\natural=\sum_{i=1}^p z_i$, the Feller condition being applied to  $\sum_{j=1}^p g^j(z^\natural)$:
 \begin{equation}\label{Feller}
 \sum_{z=1}^\infty \frac{1}{\sum_{j=1}^p g^j(z^\natural)} =+\infty 
 \end{equation}
We show below that  existence  and uniqueness of the BDS-SDE solution  can be obtained when birth intensity functionals are dominated by such a function.  The constant $K$  can actually be replaced by a locally bounded process $(k_t)$.    In the remainder of the paper,  birth intensity functionals are assumed  to be dominated by a  separable random functional:
\begin{hyp}[Dominating Assumption]\label{HypCoxBirthDom}  
Let $(g_j)_{j=1..p}$  a vector of non decreasing functions satisfying the  condition \eqref{Feller}, and $(k_t )$ is a predictable locally bounded process. We assume that:  
\begin{equation}
\label{DomBirth}
\forall \; 1\leq j \leq p, \quad \quad \mu^{(b,j)}(t, z)\leq k_t \, g_{j}( z^\natural ) \quad  \text{a.s.}.
\end{equation}
\end{hyp}
For an event of type $\gamma$, the maximum  of $\mu^\gamma$ over the finite space of populations of size smaller than $n\in \N$ is the increasing function denoted by 
\begin{equation}\label{maxsize}
\hat \mu^{\gamma}(t, n)=\sup_{\{z^\natural \leq n\}}\mu^{\gamma}(t, z).
\end{equation}
\subsubsection{Dominating counting processes}   In order to apply Proposition \ref{comparison point process}, the first step  is to  introduce a dominating process $\bolG$, solution of an SDE driven by the same Poisson measure $\bolQ= (Q^\gamma)_{\gamma \in \J}$ than $\bolN$, and whose intensity functional strongly dominates the BDS intensity functional $\bolmu$.\\
First, the  process $G^b=(G^{\gamma})_{\gamma \in \J^b}$ of  components indexed by birth event types is defined as the solution of an SDE  driven by $\bolQ^b= (Q^\gamma)_{\gamma \in \J^b}$. Then,  $G^d=(G^{\gamma})_{\gamma \in \J^d}$ (resp. $G^s=(G^{\gamma})_{\gamma \in \J^s}$) is defined as the solution of a simple thinning equation driven by $Q^d =(Q^\gamma)_{\gamma\in \J^d}$ (resp. $Q^s=(Q^\gamma)_{\gamma\in \J^s}$).

\begin{proposition}\label{Birth event process}
Let $g=(g_j)_{j=1..p}$  a non decreasing functions satisfying the  condition \eqref{Feller}, and $(k_t )$ is a predictable locally bounded process. 
 There exists a unique well-defined solution to the   SDE,
\begin{equation}
\label{Birthdominating}
\d \bolG^b_t= \bolQ^b(\d t,]0,k_tg(t,Z_0+\bolG^{b,\natural}_{t-})]).
\end{equation}
$\bolG^b$ can be extended into a $p(p+1)$ multivariate counting process $\bolG= (G^b,G^d,G^s)$ as follows: 
\begin{equation}\label{GdeathSwapevent}
 \d\bolG^d_t =\bolQ^d(\d t, ]0,\hat \mu^d(t, Z_0^\natural + G^{b,\natural}_{t^-}) ], \quad
 \d\bolG^s_t =\bolQ^s(\d t, ]0,\hat \mu^s(t, Z_0^\natural + G^{b,\natural}_{t^-}) ],
 \end{equation}
 with $\hat \mu^e=(\mu^\gamma)_{\gamma \in \J^e}$, for $e=d,s$. 
 \end{proposition} 
 \noindent  Note that $\bolG$ is not the events counting process of a well-defined BDS  population, since the  support condition \eqref{pop intensity condition} is not satisfied.

Proposition \ref{GdeathSwapevent} yields directly the existence and uniqueness of the BDS-SDE. 
\begin{theorem} \label{ThBDSexistence} 
Let $\bolmu$ be a BDS intensity functional  verifying Assumption \ref{HypCoxBirthDom}.\\
\rmi  There exists a unique  events  counting process $\bolN$,  solution  of Equation \eqref{ThinningGeneral}:
\begin{equation*}
\d\bolN_t= \bolQ(dt,]0,\bolmu(t, Z_0 + \phi \odot \bolN_{t-}]), \quad  Z_t = Z_0 + \phi \odot \bolN_{t},
\end{equation*}
defining a BDS system $(Z_0,\bolN, Z)$ of BDS intensity functional $\bolmu$ and driving measure $\bolQ$.\\ 
\rmii  $\bolN$ is strongly dominated by the multivariate counting process $\bolG$: $\bolN \prec \bolG$. 
\end{theorem}
\begin{remark} No pathwise comparison results can be obtained straightforwardly for the BDS population  itself, especially due to the presence of the various swap events.  
The aggregated population $Z^{\natural}=\sum_{i=1}^p Z^{i}$ does not depend on swap events,  but is only dominated at each time $t$  by $Z_0^\natural +   G_t^{b,\natural}$.
\end{remark}
\begin{proof}[Proof of Theorem \ref{ThBDSexistence}]
By Proposition \ref{comparison point process} and Proposition \ref{Birth event process}, the existence and uniqueness of the BDS SDE, as well as the  strong domination,  is reduced to showing  the strong domination of the BDS intensity functional  $\bolmu$ by that of $ \bolG$, which we denote  $\beta$.\\
$\bolmu$ can  be written as a functional $\alpha(t,\bolnu)=\bolmu(t, Z_0+ \phi\odot \bolnu)$ on the space of counting vectors $\bolnu=(\nu^\gamma)_{\gamma \in \J }$. 
By Assumption \ref{HypCoxBirthDom},  for all $(b,j) \in \J^b$ and counting vectors $\bolnu_1 \leq \bolnu_2 $:
\begin{align*}
& \alpha^{(b,j)}(t,\bolnu_1)  \leq k_t g_j((Z_0 +\phi\odot \bolnu_1)^\natural)\\
&  = k_t g_j(Z_0^\natural + \nu^{b,\natural}_1 -\nu^{d,\natural}_1)  \leq k_t g_j(Z_0^\natural + \nu^{b,\natural}_2) = \beta^{(b,j)}(t,\nu_2), \text{ a.s.}
\end{align*}
For $\gamma \in \J^s \cup \J^d$, 
 \begin{equation*}
\alpha^{\gamma}(t,\bolnu_1)=\mu^{\gamma}(t, Z_0+ \phi\odot \bolnu_1)\leq \hat \mu^{\gamma}(t, Z_0^\natural + \nu_2^{b,\natural})= \beta^\gamma(t, \nu_2), \text{ a.s.} 
 \end{equation*}
Thus, $\alpha \leq_s \beta$, which concludes the proof. 
\end{proof}
\begin{proof}[Proof of Proposition \ref{Birth event process}]
We need to show the existence and uniqueness of $\bolG^b$, the  solution of Equation \eqref{Birth event process}.  \\
Since $(k_t)$ is locally bounded,  there exists an increasing sequence of constants $(K_n)$ and of stopping times $(\tau_n)$, such that $\lim_{n\to +\infty} \tau_n = +\infty$ and for all $t \geq 0$, $ k_{t\wedge \tau_n} \leq K_n$, a.s.\\
- By Proposition \ref{comparison point process}, showing the result   with $ k_t \equiv K_n$  is sufficient to deduce the existence and uniqueness of the solution $G^{b,n}$ of the SDE  associated with $ k_t^n  := k_t \wedge K_n$, which coincides with $ G^b$ on $[0,\tau_n[$. The  proposition is then proved  by letting $n \to \infty$. \\[1mm]
 - For $n\geq 0$, a multivariate counting process with intensity {\em function} $ f^n(\bolnu^b) := (K_ng_j(\nu^{b,\natural}))_j$ is a classical multi-type Markov pure birth process, whose intensity only depends on the global size of the population.   The nonexplosion of these processes is well-known under \eqref{Feller}. It remains to realize such a process as the solution of an SDE driven by $\bolQ^b$, which can be done by pasting of stopped Poisson processes:\\
We start by introducing the doubly stochastic Poisson  process $Q^0_t = \bolQ^b_t(]0, f^n(Z_0^\natural)])$ and set $G^{b,n}_{t^-} =  0$ on $\{t \; ;  \; \bolQ_{t^-}^0=0\} $.\\
 The second step is to define  $\bolQ^1_t = \bolQ^b(]0, f^{n}(Z_0^\natural+1)])$,  and set  
 $$\d G^b_t = \ind_{\{ Q^0_{t^-}=0\}}\d Q^0_t+\ind_{\{Q^0_{t^-}>0\}}\d Q^1_t$$ on  $\{Q_{t^-}^0=0\}  \cup( \{ \bolQ_{t^-}^0 >0\} \cap \{ \bolQ_{t^-}^1 =0\}) $. By iterating this procedure, we  build a unique solution to \eqref{Birth event process}.
\end{proof}
%
\section{Averaging result in presence of two timescales}\label{SectionTwoTimescales}${}$\\

\noindent The study of the aggregated  population $Z^\natural= \sum_{i=1}^p Z^i$  is interesting  in order to understand specific features of the population from a macro viewpoint (aggregated demographic rates, viability...).  However
$Z^\natural$ is not a ``true'' birth-death process, since 
the aggregated birth (resp. death) intensity functionals $\mu^{b,\natural} = \sum_{j=1}^p \mu^{b,j}$ (resp. $\mu^{d,\natural}$) depend on the whole structure of the population $Z$, whose  composition is not constant between two demographic events,  due to swap events. The presence of a random environment also adds complexity.  The aggregated dynamics can be approximated by a reduced system in the presence of a separation of timescales, when swap events happen on a short timescale in comparison with the demographic timescale.  Then, swap events  have an {\em averaging effect} on demographic intensities, allowing for the demographic event counting process and the aggregated population to be approximated by  simpler nonlinear dynamics.
%
\subsection{Two timescale BDS system}\label{SubSecTwoTimescaleBDS}
Let  $(\bolmu^\dem, \bolmu^{s})$ be a BDS intensity functional as defined in Definition \ref{DefBDSintensity}, verifying   the domination  Assumption  \ref{HypCoxBirthDom}, and  $\bolQ = (Q_{\gamma})_{\gamma \in \J}$ a  multivariate Poisson measure.

The two timescale BDS system  $(Z_0,\bolN^\epsilon, Z^\epsilon)$  is  a BDS system driven by $\bolQ$,  of  intensity functional $\bolmu^{\epsilon} =(\bolmu^\dem,\frac{1}{\epsilon} \bolmu^{s})$ depending  on a small parameter $\epsilon$.  Obviously, $\bolmu^\epsilon$ also verifies Assumption  \ref{HypCoxBirthDom}.   The  swap events counting process  $\bolN^{s,\epsilon}$, of intensity functional  $\frac{1}{\epsilon} \bolmu^s(t,z)$,  explodes when $\epsilon \rightarrow 0$. Conversely,  the intensity functional of the demographic counting process $\bolN^{\dem,\epsilon}$ does not depend on $\epsilon$, but  its intensity depends on $Z^\epsilon$.
\begin{figure}[h]
\centering
\begin{tikzpicture}[x=2.4cm]
\draw[->,thick,>=latex]
  (0,0) -- (6,0) node[below right] {$t$};

\draw[thick] 
    (0,-4pt) -- ++(0,8pt) node[above] {$0$};
\draw[red, thick] 
    (0.2,-4pt) -- ++(0,8pt) ;
\draw[red, thick] 
    (0.3,-4pt) -- ++(0,8pt) ;
\draw[red, thick] 
    (0.5,-4pt) -- ++(0,8pt) ;
\draw[red, thick] 
    (0.6,-4pt) -- ++(0,8pt) ;
\draw[red, thick] 
    (0.7,-4pt) -- ++(0,8pt) ;
\draw[red, thick] 
    (1,-4pt) -- ++(0,8pt) ;     
\draw[red, thick] 
    (1.2,-4pt) -- ++(0,8pt) ;  
\draw[blue, thick] 
    (1.5,-4pt) -- ++(0,8pt) node[above] {$T_1^d$};
\draw[red, thick] 
    (1.7,-4pt) -- ++(0,8pt) ;
\draw[red, thick] 
    (1.85,-4pt) -- ++(0,8pt) ;
\draw[red, thick] 
    (2,-4pt) -- ++(0,8pt) ;
\draw[red, thick] 
    (2.1,-4pt) -- ++(0,8pt) ;
\draw[red, thick] 
    (2.3,-4pt) -- ++(0,8pt) ;     
\draw[blue, thick] 
    (2.6,-4pt) -- ++(0,8pt) node[above] {$T_2^d$};
\draw[red, thick] 
    (2.8,-4pt) -- ++(0,8pt) ;
\draw[red, thick] 
    (2.9,-4pt) -- ++(0,8pt) ;
\draw[red, thick] 
    (3.15,-4pt) -- ++(0,8pt) ;
\draw[red, thick] 
    (3.37,-4pt) -- ++(0,8pt) ;
\draw[red, thick] 
    (3.45,-4pt) -- ++(0,8pt) ;  
\draw[red, thick] 
    (3.8,-4pt) -- ++(0,8pt) ;
\draw[red, thick] 
    (4.12,-4pt) -- ++(0,8pt) ;
\draw[red, thick] 
    (4.23,-4pt) -- ++(0,8pt) ;
\draw[red, thick] 
    (4.37,-4pt) -- ++(0,8pt) ;
\draw[red, thick] 
    (4.6,-4pt) -- ++(0,8pt) ;  
\draw[blue, thick] 
    (4.8,-4pt) -- ++(0,8pt) node[above] {$T_3^d$};
\end{tikzpicture}
\vspace{-0.3cm}
\caption{Example of distribution of {\color{red} swap} events and {\color{blue} demographic} events}\label{graphseparationechelletemp}
\end{figure}
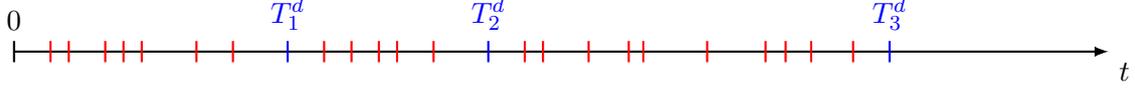

Notations introduced in Section \ref{matrixproduct} allow us to distinguish swap events from demographic events:
\begin{align}
\nonumber
& Z^\epsilon_t = Z_0 + \phi^s \odot \bolN^{s,\epsilon}_t + \bolN^{b,\epsilon}_t - \bolN^{d,\epsilon}_t, \quad \bolN^{\dem,\epsilon}_t=(\bolN^{b,\epsilon}_t, \bolN^{d,\epsilon}_t)\\
\label{EqTwotimescaleBDS}
&\d \bolN^{s,\epsilon}_t = \bolQ^s(\d t,]0,\frac{1}{\epsilon} \bolmu^s(t,Z_{t^-}^{\epsilon})]),  \quad \d \bolN^{\dem,\epsilon}_t = \bolQ^\dem(\d t,]0,\bolmu^{\dem}(t,Z_{t^-}^{\epsilon})]).
\end{align}
An important point is that since the demographic intensity functional is not modified, the dominating process  $\bolG^\dem = (\bolG^b,\bolG^d)$  defined  in Proposition \ref{Birth event process} is still a dominating process for $\Ndemeps$. 
Hence, the demographic counting processes $(\Ndemeps)_{\epsilon >0}$ are {\em  ``uniformly in $\epsilon$''  strongly dominated by $G^\dem$}. 

\paragraph*{} The aim of this section is   the study of the convergence of the demographic  counting processes $\bolN^{\dem, \epsilon}$ when swap events become instantaneous  with respect to  the demographic timescale ($\epsilon \rightarrow 0$). The aggregated population $ Z^{\epsilon,\natural} = \sum_{i=1}^p Z^{\epsilon,i}$ can also be written as a  function  $Z^{\epsilon,\natural} = Z_0^\natural + N^{b,\epsilon,\natural} - N^{d,\epsilon,\natural}$ of $\bolN^{\dem,\epsilon}$, and thus all limit results for the aggregated population can be derived from the study of $(\Ndemeps)$. 

Thanks to construction  by strong domination with $G^\dem$,  tightness properties  on the space of counting paths are obtained straightforwardly, without need of additional integrability conditions.  
This section is thus  mainly dedicated to identifying properties of   limits of $(\Ndemeps)$.  The main difficulty stems from the general random environment setting, since event intensities are random functionals  not regular  in  $t$, and do not characterize the demographic processes' distribution.  Thus, averaging results such \cite{Kurtz1992} or \cite{Yin2012continuous} does not apply here. 

In order to overcome this difficulty, we rely  on the stable convergence rather than standard tools of the convergence in distribution. This mode of convergence allows for the  randomness of the  probability space to be taken into account, and  preserves pathwise properties such as the strong domination,  which is particularly suited to our framework. However,  the right  probability spaces have to  be specified for the demographic counting processes, but also  for the family of BDS population $(Z^\epsilon)_\epsilon$.\\
Due to the explosion of swap events,  $(Z^\epsilon)_\epsilon$ cannot be considered as $\N^p$-valued processes (the family is not tight). The viewpoint introduced in \cite{Kurtz1992}, based on the convergence in distribution of the  population occupation measure,  is  not adapted either for the reasons mentioned above  (see also Remark \ref{RemarqueKurtz}). The idea here is to consider  populations $Z^\epsilon$ as   $\N^p$-random variables $\tilde{Z}^\epsilon$ on $\Omega \times \mathbb R^+$. This construction is detailed in Section \ref{SectionPopVar}.  Another technical difficulty comes from defining the stable convergence of the demographic counting processes and the population variables on the same probability space, which is done in Section \ref{SectionJointcvg}. \\[1mm]

\indent We start  by giving a short overview of this mode of convergence. 
 \subsection{Overview on the stable convergence and space of rules}
  \label{SectionStblCvg}
%
 Originated by Alfred R\'enyi, the notion of stable convergence,  is used in many limit theorems in random environment, or also for  counting processes (see e.g. \cite{Brown1981} or \cite{Jacod1987convergence}). References may be found in \cite{JacodMemin1981},  \cite{AldousEagleson1978},  or in a more recent detailed presentation in the book \cite{Hausler2015stable}. See also  \cite{castaing2004young} for a complete presentation from a topological viewpoint.
\subsubsection{Enlarged space and space of rules}
A natural extension preserving the initial  probabilistic structure  $(\Omega, \G, \P)$ is  the enlarged product space $(\bar \Omega, \bar \G) = (\Omega\times\cX, \G \otimes \mathcal B(\cX))$,  where the   identity application on $\cX$ is denoted by $ {\cal I}_d( \omega, \chi)=\chi$.

Admissible probability measures on $(\bar \Omega, \bar \G)$,  called  {\bf rules}, are probability measures whose marginal on $(\Omega,\G)$ is the given probability measure $\P$. The set of rules on $(\bar \Omega, \bar \G)$ is denoted by $\cR(\P,\cX)$.  Since $\cX$ is a Polish space, any rule $\R \in \cR(\P,\cX)$ can be disintegrated into a transition probability kernel  $\Gamma(\omega,\d \chi)$ from $(\Omega,\G)$ to $(\cX,\mathcal B(\cX))$, $\R(\d \omega, \d \chi)= \P(\d\omega)\Gamma(\omega,\d \chi)$, with 
\begin{equation}\label{KernelRule}
\R[H({\cal I}_d)]=\int_{\bar \Omega} \R(\d \omega, \d \chi)\,H(\omega,\chi) =  \int_{\Omega} \P(\d \omega)\left( \int_{\cX} H(\omega,\chi)\Gamma(\omega, \d \chi)\right)=\E[\Gamma(H)].
\end{equation}
 An example is the rule $\R^Y(\d \omega, \d \chi) = \P(\d \omega) \delta_{Y(\omega)}(\d \chi)$ associated with an $\cX$-valued $\G$-random variable $Y$. For this rule, $\Gamma^Y(\omega,\d \chi)=\delta_{Y(\omega)}(\d \chi)$ and
\begin{equation}
\R^Y[H(\cal I_d)]=\int_{\Omega} \P(\d\omega) H(\omega,Y(\omega))=\E[ H(Y)],
\end{equation}
for any $\bar \G$-r.v. $H(\omega,\chi)$.
In particular, the $\R^Y$-probability distribution of  the canonical variable $ {\cal I}_d$ is  the probability distribution $\mu^Y$ of $Y$ under $\P$. Thus, the marginal  of $\R^Y$ on $\cX$ is $\R_\cX^Y =\mu^Y$, and the rule can be seen as a coupling between $\P$ and the distribution of $Y$.
%
\subsubsection{Stable convergence}
The different modes of convergence of probability measures  are usually characterized by their family of tests functions. For example, the convergence in distribution is described by   bounded continuous functions on $\cX$, here denoted by $\cal C_{bc}(\cX)$, ($\cC_{b}(\cX)$ for  bounded functions). The {\em stable convergence}  is an extension of the  convergence  in distribution to the space of rules. 
 The class of test functions is extended to the family $\cC_{bmc}(\Omega\times\cX)$, of  bounded functionals $H(\omega, \chi)$, continuous in $\chi$ for any $\omega$, but only  measurable in $\omega$.%
\begin{definition}[Stable convergence, first viewpoint] 
\label{stablecv} 
A sequence of $\G$-random variables $(Y_n)$ converges stably to a rule $\R = \P\Gamma \in \cR(\P,\cX)$ iff
\begin{equation}
\forall H \in  \cC_{bmc}(\Omega\times\cX), \quad   \E[H( Y_n)] \rightarrow_n \E[\Gamma(H)].
\end{equation}
\end{definition}
Thus, the stable convergence of $(Y_n)$  can be considered as the convergence of random functionals of these variables, towards a  random kernel defined on the initial probability space. 

A second  viewpoint is to define the stable limit as a random variable defined on the enlarged space $(\bar{\Omega}, \bar{\G},\R)$ endowed with the limit rule $\R$. The variables $Y_n$  are naturally extended to  $(\bar{\Omega}, \bar{\G},\R)$, by setting $Y_n(\omega,\chi) =Y_n(\omega)$.
\begin{definition}[Stable convergence, second  viewpoint]
\label{StableCvgScdViewpoint}
A sequence of $\G$-random variables  $(Y_n)$ converges stably to $\R$ iff,
\begin{equation}
 \forall H \in \mathcal C_{bmc}(\Omega\times \cX),\quad \R[H(Y_n)]= \E[H(Y_n)]\rightarrow \R[ H({\cal I}_d)],
\end{equation}
since $\R[ H({\cal I}_d)] = \E[\Gamma(H)]$ by \eqref{KernelRule}.
\end{definition}
According to the chosen viewpoint, we will alternatively say that $(Y^n)$ converges stably to the rule $\R$, to {the random kernel $\Gamma$}, or  to ${\cal I}_d$ in $(\bar \Omega,\bar \G, \R)$.

 Let us give two extensions, useful in the following:\\ 
\rmi  The boundedness requirement can by replaced by   the uniform integrability of the sequence $(H(\cdot, Y^n))$.\\
\rmii  The continuity of $H(\omega,\cdot)$ can be relaxed if there exists a subspace $A$ of $\bar{\Omega}$ with $\R^n(A) \rightarrow 1$ and $\R(A) = 1$, and such that $\P$-a.s., the restrictions of $H(\omega,\cdot)$ to $A_\omega$ are continuous, $\Gamma(\omega,\cdot)$-a.s.
%
\subsubsection{Relative compactness}  We recall the link between tightness properties and relative compactness properties for the stable convergence:
\begin{lemma}[\cite{JacodMemin1981}]
\label{EquivalTightnessStableComp}
Let $(Y_n)$ be a sequence of random variables, with  distribution $(\mu_n)$. Then,\\
 $(\R^{Y_n})$ is  relatively compact in $\cR(\P,\cX)$ iff $(\mu_n)$ is tight in $\cX$.  
In particular, if $(\mu_n)$ converges weakly to $\mu$, then  $(Y_n)$ converges stably along a subsequence to a rule $\R$ such that $\mu = \R_{\cX}$. 
\end{lemma}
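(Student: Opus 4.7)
The plan is to handle the two implications separately; the forward direction is a formality, while the substance is in the reverse one. In both directions, the key fact is that by construction, for every $n$, the first marginal of $\R^{Y_n}$ is $\P$ and its second marginal is $\mu_n$, so relative compactness of the rules on $\bar\Omega=\Omega\times\X$ is tied to compactness of the second marginals on $\X$.

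For the forward direction, I assume $(\R^{Y_n})$ is stably relatively compact and extract, from an arbitrary subsequence, a further subsequence converging stably to some $\R\in\cR(\P,\X)$. Applying Definition \ref{stablecv} with test functions of the form $H(\omega,\chi)=g(\chi)$, $g\in\cC_b(\X)$, yields weak convergence of $\mu_n$ to the $\X$-marginal $\R_\X$ along this subsequence. Thus every subsequence of $(\mu_n)$ has a weakly convergent sub-subsequence, and by Prokhorov on the Polish space $\X$, $(\mu_n)$ is tight.

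For the reverse direction, I assume $(\mu_n)$ is tight and fix a countable convergence-determining family $(g_k)_{k\geq 1}\subset\cC_b(\X)$, which exists because $\X$ is Polish. By a diagonal extraction, I can assume along a subsequence that for every $k$ the bounded sequence $\bigl(g_k(Y_n)\bigr)_n$ converges in the weak-$*$ topology $\sigma\bigl(L^\infty(\Omega,\P),L^1(\Omega,\P)\bigr)$ to some $W_k\in L^\infty$. The assignment $g_k\mapsto W_k$ is linear and positive; evaluating it pointwise in $\omega$ defines, for $\P$-a.e.\ $\omega$, a positive linear functional on the countable family $(g_k)$ which extends by density into a sub-probability measure $\Gamma(\omega,\cdot)$ on $\X$. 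Tightness of $(\mu_n)$ is precisely what forces $\Gamma(\omega,\X)=1$ for $\P$-a.e.\ $\omega$: testing against Urysohn-type functions $\ind_{K_\eta}\leq g^\eta\leq \ind_\X$ associated with compacts $K_\eta$ such that $\sup_n\mu_n(\X\setminus K_\eta)\leq\eta$ yields
\[
\E\bigl[h\,(1-\Gamma(g^\eta))\bigr]\leq \eta\,\E[|h|],\qquad h\in L^1(\P),
\]
so $\Gamma$ is a genuine probability kernel, and $\R(\d\omega,\d\chi)=\P(\d\omega)\Gamma(\omega,\d\chi)$ is a rule.

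The hard part is then to upgrade the convergence
\[
\E[h(\omega)\,g_k(Y_n)]\longrightarrow \E[h\,\Gamma(g_k)]=\R[h\otimes g_k]
\]
obtained for simple tensors to the full class $\cC_{bmc}(\Omega\times\X)$ of bounded functions continuous in $\chi$ and merely measurable in $\omega$, which is the test class defining stable convergence. This is handled in two steps: on a compact $K_\eta$ provided by tightness, one approximates $H(\omega,\cdot)$ uniformly by finite linear combinations $\sum_j h_j(\omega)g_{k_j}(\chi)$, using density of the $g_k$ in $\cC_b(K_\eta)$ and a simple-function approximation of $\omega\mapsto H(\omega,\cdot)\vert_{K_\eta}$; the error off $K_\eta$ is controlled uniformly in $n$ by $\sup_n\mu_n(\X\setminus K_\eta)\leq\eta$, and $\Gamma$ inherits the same estimate by Fatou. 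The ``in particular'' statement is then immediate: if $\mu_n\to\mu$ weakly, $(\mu_n)$ is tight, the previous construction produces a stably convergent subsequence $\R^{Y_{n'}}\to\R$, and the forward direction identifies $\R_\X=\mu$.
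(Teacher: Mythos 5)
The paper does not prove this lemma at all: it is quoted from \cite{JacodMemin1981} (see also Theorem~3.17 and its surroundings in \cite{Hausler2015stable}), so there is no internal proof to compare against. Your argument is essentially the classical proof from that literature: marginal testing plus the converse of Prokhorov's theorem for the easy direction, and for the converse a weak-$*$ limit in $L^\infty(\P)$ along a countable convergence-determining class, assembled $\omega$-wise into a kernel $\Gamma$, with tightness of $(\mu_n)$ preventing mass from escaping to infinity, followed by a tensor-to-$\cC_{bmc}$ upgrade. The architecture is sound and the forward direction and the ``in particular'' clause are correct as written.

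Two technical points in the reverse direction deserve more care than you give them. First, the diagonal extraction of a $\sigma(L^\infty,L^1)$-convergent subsequence of $(g_k(Y_n))_n$ requires sequential weak-$*$ compactness of bounded sets in $L^\infty(\Omega,\G,\P)$, which holds only when $L^1(\P)$ is separable; since $\G$ is arbitrary here, you should first extract along a countable dense subset of $L^1(\sigma(Y_n:n\geq 1),\P)$, which is countably generated, and then pass to general $h\in L^1(\G)$ by writing $\E[h\,g_k(Y_n)]=\E[\E[h\mid\sigma(Y_n:n)]\,g_k(Y_n)]$. Second, the order of operations in building $\Gamma$ is backwards: a positive linear functional on $\cC_b(\X)$ for non-compact $\X$ is a priori only a finitely additive set function (equivalently, a measure on a compactification), so you cannot first declare $\Gamma(\omega,\cdot)$ a sub-probability measure on $\X$ and only afterwards use tightness to normalize it. The Urysohn estimate $\Gamma(\omega,g^\eta)\geq 1-\eta$ along a sequence $\eta_m\downarrow 0$ is exactly what shows the limiting functional is tight, hence countably additive and carried by $\X$; it must be invoked at the construction stage, not as an afterthought. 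Both points are standard and repairable in a few lines, so the proposal is a faithful sketch of the cited proof rather than a flawed one.
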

\noindent For simplicity, we also say ``$(Y_n)$ or $(\Gamma^{Y_n})$ is stably relatively compact  in $\cR(\P,\cX)$'' and use the abuse of language ``$(Y^n)$ is tight in $\cX$''. 
\subsection{Stable limits of  $\Ndemeps$}
\label{SectionCvgDemSystem}
 We now come back to the study of the the demographic counting processes $(\Ndemeps)$ for the two timescale BDS  \eqref{EqTwotimescaleBDS}. \\
 We denote by $\cal A^{2p}$  the closed subspace of the Skorohod space of c\`adl\`ag functions from $\mathbb{R}^+$ to $\N^{2p}$, composed of $\N^{2p}$-valued functions whose components only have unit jumps and no common jumps. We recall that for $[\alpha]\in \cal A^{2p}$,   $[\alpha]_t = \alpha(\cdot \wedge t)$. 
The canonical filtration is  denoted by $(\F^{\cal A}_t) =  (\sigma(\alpha(s) \; ; \; s\leq t, \; [\alpha] \in \cal A^{2p} ))$.\\

Stable limits of   $(\Ndemeps)_{\epsilon}$  are defined according to the second viewpoint (Definition \ref{StableCvgScdViewpoint}). The demographic processes $(\Ndemeps)$ are considered as $\cX = \cal A^{2p}$-valued random variables, and the enlarged space is
\begin{equation}
\label{enlargedspace}
(\bar \Omega, (\bar \G_t)) = (\Omega\times \cal A^{2p}, (\G_t \otimes \F^{\cal A}_t)), \quad \text{with } \bbolN^{\dem}(\omega,[\alpha]) = [\alpha].
\end{equation} 
Then,    $(\Ndemeps)$ converges stably to $\bbolN^{\dem}$  on   $(\bar \Omega, (\bar \G_t), \R^\dem)$ to  if 
\begin{equation*}
 \forall H \in \mathcal C_{bmc}(\Omega \times \cal A^{2p}),\quad \R^\dem[ H(\Ndemeps)] =\E[H(\Ndemeps)] \underset{\epsilon \to 0}{\longrightarrow} \R^\dem[ H(\bbolN^{\dem})].
\end{equation*} 
The strong domination  yields straightforward tightness properties. In addition, the property  is preserved  at the limit:
\begin{proposition}
\label{RelativeCompactnessNdem}
\rmi The family of  demographic counting processes $(\Ndemeps)$  is  stably relatively compact in $\cR(\P, \cal A^{2p})$. \\
\rmii Let $\R^\dem \in \cR(\P, \cal A^{2p})$ be a stable limit  of $(\Ndemeps)$.   Then,  $\bbolN^\dem$ is $\R^\dem$-a.s. strongly dominated by $G^\dem$: $\bbolN^\dem\prec G^\dem$.\\
\rmiii  If  $(\Ndemeps)$  converges stably to $\bar \bolN^\dem$ on $(\bar \Omega, \bar \G, \R^\dem)$,  then  the aggregated population processes $(Z^{\epsilon,\natural})$ converges  stably toward the process defined  by 
\begin{equation}
\label{LimitProcZnatural}
\bar X_t  = Z_0^\natural + \bar N^{b,\natural}_t - \bar N^{d,\natural}_t, \quad \forall t \geq 0.
\end{equation}
\end{proposition}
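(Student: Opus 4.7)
By Theorem \ref{ThBDSexistence}(ii) applied to $\bolmu^\epsilon=(\bolmu^\dem,\frac{1}{\epsilon}\bolmu^s)$, one has $\Ndemeps\prec\bolG^\dem$, where the demographic part $\bolG^\dem$ of the dominating process of Proposition \ref{Birth event process} does \emph{not} depend on $\epsilon$, since its intensity functionals $\bol g^b$ and $\hat\mu^\dem$ are built only from $\bolmu^\dem$. Consequently every jump of $\Ndemeps$ occurs among the locally finite jump times of $\bolG^\dem$ and $|\Ndemeps_t|\leq|\bolG^\dem_t|$ a.s. Fixing $T,\eta>0$ and $K$ with $\P(|\bolG^\dem_T|>K)\leq\eta$, the family of paths $\{t\mapsto \Ndemeps_t,\;t\in[0,T]\}_\epsilon$ is, outside an event of probability $\eta$, contained in the compact subset of $\cal A^{2p}$ consisting of $\N^{2p}$-valued c\`adl\`ag step functions with at most $K$ unit jumps located within the finite jump set of $\bolG^\dem$. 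This gives tightness of the laws in $\cal A^{2p}$, and Lemma \ref{EquivalTightnessStableComp} yields stable relative compactness in $\cR(\P,\cal A^{2p})$.

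\textbf{Part (ii).} Extend $\bolG^\dem$ to $\bar\Omega$ trivially by $\bolG^\dem(\omega,[\alpha]):=\bolG^\dem(\omega)$. The conclusion $\bbolN^\dem\prec\bolG^\dem$ is equivalent to showing coordinatewise that $D:=\bolG^\dem-\bbolN^\dem$ is a non-negative integer-valued non-decreasing path $\R^\dem$-a.s. For $0\leq s<t$ and $\gamma\in\J^\dem$, set
\[
F^\gamma_{s,t}(\omega,[\alpha])\;:=\;\bigl(G^{\dem,\gamma}_t(\omega)-G^{\dem,\gamma}_s(\omega)\bigr)-\bigl(\alpha^\gamma(t)-\alpha^\gamma(s)\bigr).
\]
Strong domination gives $F^\gamma_{s,t}(\omega,\Ndemeps(\omega))\geq 0$ a.s. for every $\epsilon$. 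Pick $(s,t)$ outside the (countable) set of fixed discontinuities of $\bbolN^\dem$ under $\R^\dem$ and of $\Ndemeps$ under $\P$ for all $\epsilon$. The bounded truncation $\psi(\omega,[\alpha]):=1\wedge(F^\gamma_{s,t}(\omega,[\alpha]))_-$, with $(x)_-=\max(-x,0)$, is continuous in $[\alpha]$ whenever $\alpha$ is continuous at $s$ and $t$, hence $\R^{\epsilon,\dem}$- and $\R^\dem$-a.s. continuous on the set $A_{s,t}$ of such $(\omega,[\alpha])$, which has full measure under both rules. Moreover $\R^{\epsilon,\dem}[\psi]=0$. By the continuity-relaxation extension of stable convergence (item (ii) after Definition \ref{StableCvgScdViewpoint}), $\R^\dem[\psi]=0$, so $F^\gamma_{s,t}\geq 0$ $\R^\dem$-a.s. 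Right-continuity of paths then propagates the inequality simultaneously to all $0\leq s<t$. Thus $D$ is non-decreasing $\R^\dem$-a.s.; since $\bbolN^\dem,\bolG^\dem\in\cal A^{2p}$ have only unit jumps, non-decreasingness and integrality of $D$ with $D_0=0$ give $\bbolN^\dem\prec\bolG^\dem$ $\R^\dem$-a.s.

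\textbf{Part (iii).} The aggregation map $\Phi:\cal A^{2p}\to {\rm D}(\mathbb R^+,\mathbb Z)$ defined by $\Phi([\alpha])_t:=Z_0^\natural+\alpha^{b,\natural}_t-\alpha^{d,\natural}_t$ is continuous for the Skorohod $J_1$ topologies: coordinate projections are continuous, and the no-common-jump property characterizing $\cal A^{2p}$ ensures that finite linear combinations of coordinates remain $J_1$-continuous. Since continuous maps preserve stable convergence (apply Definition \ref{StableCvgScdViewpoint} to $H\circ(\mathrm{id}_\Omega\otimes\Phi)$), the stable convergence $\Ndemeps\limst\bbolN^\dem$ yields $Z^{\epsilon,\natural}=\Phi(\Ndemeps)\limst\Phi(\bbolN^\dem)=\bar X$.

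\textbf{Main obstacle.} The delicate step is (ii): strong domination $\prec$ is not a $J_1$-closed relation, since it is sensitive to coincidences between jump times of $\bbolN^\dem$ and $\bolG^\dem$ that are fragile under Skorohod perturbations. The remedy is to pass the robust increment inequalities $N^\gamma_t-N^\gamma_s\leq G^\gamma_t-G^\gamma_s$ to the limit at deterministic Skorohod continuity points via the stable-convergence extension, and then to reassemble the pathwise $\prec$ relation by using that both processes live in $\cal A^{2p}$.
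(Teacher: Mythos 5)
Your overall strategy (uniform strong domination by the $\epsilon$-independent process $\bolG^\dem$) is the same as the paper's, but part (i) as written has a genuine gap. The ``compact subset of $\mathcal A^{2p}$ consisting of step functions with at most $K$ unit jumps located within the finite jump set of $\bolG^\dem$'' is a \emph{random} set: it depends on $\omega$ through the jump times of $\bolG^\dem(\omega)$. Tightness of the laws requires a single deterministic compact set $K_\eta$ with $\inf_\epsilon \P(\Ndemeps \in K_\eta)\geq 1-\eta$, and the union over $\omega$ of your random sets is not relatively compact in the Skorohod topology: step functions with at most $K$ unit jumps can have jumps clustering arbitrarily closely, which destroys the modified modulus-of-continuity condition $w'$. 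Repairing this would require an additional uniform control, outside a small event, of the minimal gap between consecutive jumps of $\bolG^\dem$ on $[0,T]$. The paper avoids the issue entirely: from $\Ndemeps\prec\bolG^\dem$ it derives the increment bound $0\leq \Ndemeps_T-\Ndemeps_S\leq \bolG^\dem_T-\bolG^\dem_S$ for stopping times $S\leq T$ and checks the Aldous criterion directly, which is the clean way to convert domination by a fixed non-explosive process into tightness.

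Parts (ii) and (iii) are correct. Your treatment of (ii) is in fact more careful than the paper's: the paper simply asserts that $(\bolG^\dem-\Ndemeps)$ converges stably to $\bolG^\dem-\bbolN^\dem$ and concludes, which glosses over exactly the difficulty you flag, namely that subtraction is not $J_1$-continuous at pairs of paths sharing jump times --- and here \emph{all} jumps of $\Ndemeps$ are jumps of $\bolG^\dem$. Your route through the increment functionals $F^\gamma_{s,t}$, evaluated at times that are a.s.\ Skorohod continuity points and passed to the limit via the continuity-relaxation of stable convergence, then reassembled into the pathwise relation $\bbolN^\dem\prec\bolG^\dem$ by monotonicity and integrality, is a legitimate and more robust argument. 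Part (iii) matches the paper's one-line application of stable convergence to the aggregation functional, with the useful added observation that the no-common-jump property defining $\mathcal A^{2p}$ is what makes that functional $J_1$-continuous.
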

Note that  $\bar X$ is not the aggregation of an underlying heterogeneous population, since  the  one dimensional population is obtained when seeing the aggregated population $Z^{\epsilon, \natural}$ as a  function of $\Ndemeps$. 
\begin{proof}
\rmi  By Lemma \ref{EquivalTightnessStableComp}, it is sufficient to show that $(\Ndemeps)$ is tight in $\cal A^{2p}$.   Let $\epsilon >0$. 
Since $\Ndemeps \prec \bolG^\dem$,   $ G^\dem - \Ndemeps$ is a multivariate counting process.
Thus, 
for any stopping times $S \leq T $,
\begin{align*}
 0 \leq   G^\dem_S - \Ndemeps_S \leq  G^\dem_T - \Ndemeps_T,  
\text{ and thus } \;  0 \leq \Ndemeps_T - \Ndemeps_S  \leq  G^\dem_T -  G^\dem_S.
\end{align*}
This  yields  that  $(\Ndemeps)$ is tight by  the Aldous tightness criterion.\\
\rmii If $(\Ndemeps)$ converges stably along a subsequence to $\bbolN^{\dem}$ on $(\bar \Omega, (\bar \G_t), \R^\dem)$, then $(G^\dem -\Ndemeps)$ also  converges stably (along the same subsequence) to $ G^\dem -\bbolN^{\dem}$ on $(\bar \Omega, (\bar \G_t), \R^\dem)$. Thus, $G^\dem -\bbolN^{\dem}$ is a multivariate counting process, i.e. $\bbolN^{\dem} \prec  G^\dem$.\\
\rmiii $Z^{\epsilon,\natural}= F(\Ndemeps) = Z_0^\natural + N^{b,\epsilon,\natural} - N^{d,\epsilon,\natural} $ is a continuous function of $\Ndemeps$. Thus, if $(\Ndemeps)$ converges stably to $\bar \bolN^\dem$,  then $(Z^{\epsilon,\natural})$ converges stably to $F(\bar \bolN^\dem)$.
\end{proof}
%
\subsection{Population variables $\tilde Z^\epsilon$ } 
\label{SectionPopVar}
 In order to establish additional properties  for stable limits of $(\Ndemeps)$, we may study the limit of their compensators $ A^\epsilon_t := \int_0^t \bolmu^\dem(s,Z^\epsilon_{s})\d s$.  The difficulty is that   $Z^\epsilon= Z_0 + \phi^s \odot \bolN^{s,\epsilon}  +\bolN^{b,\epsilon} - \bolN^{d,\epsilon}$  depends  on the  process $\bolN^{s,\epsilon}$ counting swap events. Due to the explosion  of swap events, the family of  population processes $(Z^\epsilon)_{\epsilon >0}$ is   not tight in $D(\mathbb R^+,\N^p)$. The idea   is  to  study a weaker version of the BDS population, by seeing $Z^\epsilon$ as an {\em $\N^p$-valued random variable} on $ \tilde \Omega := \Omega \times \mathbb R^+$, on a probability space well-suited to the study of the compensators.\\

 $\tilde \Omega$  is equipped with the optional $\sigma$-field $\mathcal O$ (generated by adapted c\`adl\`ag processes), and the probability measure $\tilde \P = \P \otimes \lambda^e$, with $\lambda^e(\d s) =e^{-s}\d s$. 
 For any  bounded variable $\tilde U(\omega,s )=U_s( \omega)$,
\begin{equation*}
\tilde \E (\tilde U)= \E [\int_{0}^{\infty} U_s \lambda^e(\d  s)]= \E[\int_{0}^\infty  U_s e^{-s} \d  s].
\end{equation*}
The family of BDS population is tight on this space:
\begin{proposition}[Population variables] 
\label{PopTightZ}
For $\epsilon>0$, let 
\begin{equation}
\tilde Z^\epsilon(\omega,s) =Z_s^\epsilon(\omega),\quad \forall \; (\omega,s) \in \tilde{\Omega}.
\end{equation}
Then, the  family of $\N^p$-valued random variables $(\tilde Z^\epsilon)$ defined on $(\tilde \Omega , \mathcal O,  \tilde \P)$  is stably relatively compact.
\end{proposition}
\begin{proof}
\noindent The stable relative compactness is once again a consequence of the construction by strong domination. Indeed, $\forall \epsilon >0$ $\tilde Z^{\epsilon,\natural}(\omega,s) \leq Z_0^\natural(\omega) + \tilde G^{b,\natural} (\omega,s) = Z_0^\natural(\omega) + G^{b,\natural}_s(\omega)$, 
which yields the tightness, and thus the relative compactness of the family of $\N^p$-random variables.
\end{proof}

\subsection{Joint stable limits of $\Ndemeps$ and $\tilde Z^\epsilon$} \label{SectionJointcvg}
 Informally,  the stable convergence of $\tilde Z^\epsilon$ on $\Omega \times \mathbb R^+$ corresponds to the  convergence of integrals of type $\E [\int_0^{\tau} \bollambda( s,Z^\epsilon_s) \d s]$.  However, we need a little bit more in order to carry out the study of the compensators: the joint stable convergence of $(\Ndemeps)$ and $(\tilde Z^\epsilon)$, this first requires to  define  $\Ndemeps$ and $\tilde Z^\epsilon$ on  the same probability space.
\paragraph{Extension of $(\Ndemeps)$  to $\tilde \Omega$ }This technical step is done  by defining on   $(\tilde \Omega, \mathcal{O})$:
\begin{equation}
\tilde{\bolN}^{\dem,\epsilon}(\omega,s)  =[\Ndemeps]_s(\omega) =\Ndemeps_{\cdot\wedge s}(\omega),\quad \forall (\omega,s) \in \tilde{\Omega}.
\end{equation}  
By Lemma \ref{LemmaNtilde}, the stable convergence of $(\tilde{\bolN}^{\dem,\epsilon})$ to a rule $\tilde \R^{\emph \dem}$  is equivalent to the stable convergence of $(\Ndemeps)$ to $\R^\dem$, with 
\begin{equation}
\label{EqLemmaA2}
\tilde{\R}^{\emph \dem} (\tilde h) = \R^{\emph \dem}[\int_0^\infty  h_s([\bar {\emph \bolN}^\dem]_s)\lambda^e(\d s)], \quad \forall \; \tilde{h} \in \cC_{bmc}(\tilde \Omega \times \cal A^{2p}).
\end{equation}
Stopping the trajectory at time $s$ ensures that $\tilde{\bolN}^{\dem,\epsilon}$ is $\cal O$-measurable. Thus, at the limit all processes considered are $(\bar \G_t  = \G_t \otimes \cal F^{\mathcal A}_t)$-adapted.
\paragraph{Joint stable limits}  We now turn to stable limits of  the family  of $\cal A^{2p}  \times \N^p$ variables  $((\tilde{\bolN}^{\dem,\epsilon},\tilde Z^\epsilon))$, defined on $(\tilde{\Omega},\cal O, \tilde P)$.\\
 Let $\tilde{\R}  \in \cR(\tilde \P, \cal A^{2p}\times \N^p)$  be  a stable limit of  $((\tilde{\bolN}^{\dem,\epsilon},\tilde Z^\epsilon))$.  $\tilde{\R}$ can be disintegrated with respect to its marginal $\tilde{\R}^\dem$  into\\[1mm]
 \centerline{$\tilde \R(\d \omega, \d s, \d [\alpha], \d z) =\tilde \R^{\dem}(\d \omega, \d s, \d [\alpha])\tilde \Gamma( \omega,  s, \d z)$,}\\[1mm]
where $\tilde{\Gamma}:= (\Gamma_s)_{s \geq 0}$ is a random probability kernel from $\tilde \Omega \times \cal A^{2p}$ to $\N^p$.\\
In particular, $\tilde \R^{\dem}$ is a stable limit of $(\tilde{\bolN}^{\dem,\epsilon})$. Combined with  \eqref{EqLemmaA2}, this yields the following decomposition:
\begin{lemma} 
\label{LemmaJointCvg}
Let $ \tilde{g}:= (g_s)_s \in C_{bm}(\tilde \Omega \times \cal A^{2p} \times \N^p)$. 
Stable limits  $\tilde{\R} \in \cR(\tilde \P, \cal A^{2p}\times \N^p) $ of $((\tilde{\bolN}^{\dem,\epsilon},\tilde Z^\epsilon))$ admit the following decomposition:
\begin{equation*}
 \tilde{\R}(\tilde{g}) = \R^\dem[\int_0^\infty \int_{\N^p} g_s([\bbolN^\dem]_s,z)  \Gamma_s( [\bbolN^\dem]_s,\d z)  \lambda^e(\d s)],
\end{equation*}
where  ${\R}^{\dem}$  is a stable limit of $({\bolN}^{\dem,\epsilon})$ and $(\Gamma_s)$ is the random  kernel defined above, which  can be interpreted as a stable limit of $(\tilde{Z}^\epsilon)$, ``conditionally to $ \tilde{\bolN}^{\dem,\epsilon}$''. \\
Such a couple $(\R^\dem,\Gamma)$ is called stable limit of $((\Ndemeps,\tilde{Z}^\epsilon))$.
\end{lemma}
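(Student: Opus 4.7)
The plan is to view the identity as a combination of two routine operations: the disintegration of the rule $\tilde{\R}$ against a marginal, and the identity \eqref{EqLemmaA2} that has already been established in Lemma~\ref{LemmaNtilde} (or the preceding paragraph) relating rules on $\tilde{\Omega}\times\cA^{2p}$ to rules on $\Omega\times\cA^{2p}$.

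First I would verify that the marginal of $\tilde{\R}$ on $(\tilde{\Omega}\times\cA^{2p}, \cO\otimes\F^{\cA})$ is exactly $\tilde{\R}^{\dem}$. This is a general property of stable convergence: if the couple $((\tilde{\bolN}^{\dem,\epsilon},\tilde Z^{\epsilon}))$ converges stably to $\tilde{\R}$, then applying the convergence to test functions $\tilde{g}(\omega,s,[\alpha],z)=h_s(\omega,[\alpha])$ that do not depend on the $\N^p$-coordinate shows that $(\tilde{\bolN}^{\dem,\epsilon})$ converges stably to the $\tilde{\Omega}\times\cA^{2p}$-marginal of $\tilde{\R}$, which is therefore identified with the stable limit $\tilde{\R}^{\dem}$ of $(\tilde{\bolN}^{\dem,\epsilon})$.

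Next, since $\N^p$ is a Polish space (in fact discrete and countable), the rule $\tilde{\R}$ admits a disintegration against its marginal $\tilde{\R}^{\dem}$: there exists a transition probability kernel $\tilde{\Gamma}(\omega,s,[\alpha],\d z)=\Gamma_s(\omega,[\alpha],\d z)$ from $\tilde{\Omega}\times\cA^{2p}$ to $\N^p$ such that
\begin{equation*}
\tilde{\R}(\tilde g)=\int_{\tilde{\Omega}\times\cA^{2p}}\Big(\int_{\N^p} g_s(\omega,[\alpha],z)\,\Gamma_s(\omega,[\alpha],\d z)\Big)\,\tilde{\R}^{\dem}(\d\omega,\d s,\d[\alpha]).
\end{equation*}
Writing $h_s(\omega,[\alpha]):=\int_{\N^p} g_s(\omega,[\alpha],z)\,\Gamma_s(\omega,[\alpha],\d z)$, the boundedness of $g$ and measurable selection on the Polish space $\N^p$ ensure that $\tilde{h}\in\cC_{bm}(\tilde{\Omega}\times\cA^{2p})$, so \eqref{EqLemmaA2} applies.

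Finally, applying \eqref{EqLemmaA2} to $\tilde{h}$ yields
\begin{equation*}
\tilde{\R}(\tilde g)=\tilde{\R}^{\dem}(\tilde h)=\R^{\dem}\Big[\int_0^{\infty} h_s\bigl([\bbolN^{\dem}]_s\bigr)\,\lambda^e(\d s)\Big]=\R^{\dem}\Big[\int_0^{\infty}\!\!\int_{\N^p} g_s\bigl([\bbolN^{\dem}]_s,z\bigr)\,\Gamma_s\bigl([\bbolN^{\dem}]_s,\d z\bigr)\,\lambda^e(\d s)\Big],
\end{equation*}
which is the claimed decomposition. There is no real obstacle: the two ingredients (existence of a regular conditional probability on a Polish target, and the identity relating $\tilde{\R}^{\dem}$ to $\R^{\dem}$) are at hand. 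The only subtle point is ensuring that the kernel $\Gamma_s$ is measurable as a function of $(\omega,s,[\alpha])$ jointly, which is guaranteed by standard disintegration theorems once the underlying spaces are Polish and the marginal $\tilde{\R}^{\dem}$ has been properly identified in the first step.
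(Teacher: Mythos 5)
Your proposal is correct and follows essentially the same route as the paper: disintegrate $\tilde{\R}$ against its marginal $\tilde{\R}^{\dem}$ on $\tilde{\Omega}\times\cA^{2p}$ (identified as the stable limit of $(\tilde{\bolN}^{\dem,\epsilon})$ by testing against functions independent of the $\N^p$-coordinate), then apply the identity \eqref{EqLemmaA2} from Lemma~\ref{LemmaNtilde} to the integrated test function $\tilde h$. The only difference is that you spell out the identification of the marginal and the measurability of the kernel, which the paper takes for granted in the paragraph preceding the lemma.
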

\begin{proof}
Using the disintegration introduced above, 
$ \tilde{\R}(\tilde{g}) =  \tilde \R^\dem\big(\int_{\N^p} \tilde g(\cdot ,z) \tilde \Gamma( \cdot,\d z)\big)$.\\
Applying Lemma \ref{LemmaNtilde} with $\tilde{h}([\alpha]) = \int_{\N^p} \tilde g(\cdot , [\alpha] ,z)  \tilde \Gamma(\cdot , [\alpha],\d z)$ yields the result. 
\end{proof}
By taking $\tilde{g}(\omega,s,[\alpha],z) = h(\omega,[\alpha]_{u})\bolmu^\dem(\omega,s,z)\indi_{[u\wedge \tau, t\wedge \tau]}(\omega,s)e^s$, we recover:
\begin{equation*}
\tilde{\E}[\tilde{g}([\tilde \bolN^{\dem,\epsilon}], \tilde{Z}^\epsilon)] = \E [h([\bolN^{\dem,\epsilon}]_{u})  \int_{u\wedge \tau}^{t\wedge \tau} \bolmu^\dem(s,Z^\epsilon_s)\d s] =  \E [h([\bolN^{\dem,\epsilon}]_{u})\big(A^\epsilon_{t \wedge \tau} - A^{\epsilon}_{u\wedge \tau}\big)].
\end{equation*}
Thus,  if $((\Ndemeps, \tilde{Z}^\epsilon))$ converges stably to $(\R^\dem, \Gamma)$,  applying  informally the convergence to $\tilde g$ yields:
\begin{align*}
\label{Eqjointcvg}
\E [h([\bolN^{\dem,\epsilon}]_{u})  \int_{u\wedge \tau}^{t\wedge \tau} \bolmu^\dem(s,Z^\epsilon_s)\d s] \to 
 \R^\dem [h([\bar \bolN^\dem]_{u}) \int_{u\wedge \tau}^{t\wedge \tau}\int_{\N^p} \bolmu^\dem(s,z)  \Gamma_s([\bar \bolN^\dem]_s,\d z)\d s] .
\end{align*}
\begin{remark}
\label{RemarqueKurtz}
 It is usual when studying limits of counting processes to study the limit {\em in distribution} of the compensators.  For instance,  in the spirit of  \cite{Kurtz1992}, we would have  studied limits of the random occupation measure $\Lambda^\epsilon (\d z,\d s) =\delta_{Z^\epsilon_s}(\d z) \d s$, since   $  A^\epsilon_t = \int_0^t \bolmu^\dem( s,Z^\epsilon_s) \d s =  \Lambda^\epsilon(\indi_{[0,t]}\bolmu^\dem)$. However,  the convergence in distribution of $(\Lambda_\epsilon)_\epsilon$ does not yields the convergence in distribution of $(A_\epsilon)_\epsilon$,  since $ \bolmu^\dem$ is a general random functional.
 
 The stable convergence applied on the space $(\tilde{\Omega},\mathcal{O}, \tilde{\P})$ rather  yields the convergence of ``conditional expectations''  of the compensators. Note also that here, it is straightforward that compensators' limits are absolutely  continuous with respect to the Lebesgue measure, which is not trivial   when considering the convergence  in distribution of the occupation measures (see Lemma 1.4 in \cite{Kurtz1992}). 
\end{remark}
\subsection{Averaging results}
We close this section with a general averaging result,  showing that  at the limit, the  intensity of the demographic processes $(\Ndemeps)$ is the demographic intensity functional $\bolmu^\dem$, averaged against  a stable limit $\Gamma$  of $(\tilde Z^\epsilon)$. In particular,  the  aggregated averaged birth and death intensity only depend on the past demographic events and the environment. 
 We also obtain specific  support properties for the kernel $\Gamma$. 
These results can be applied to obtain weak convergence results for the demographic counting processes, as detailed in Section \ref{SectionWeakConvergence}.
\begin{theorem}
\label{ThIdentificationResult}
Let $(\R^{\emph \dem},{\Gamma})$ be 
a stable limit of $({\emph{\bolN}}^{\dem,\epsilon}, \tilde Z^\epsilon)$.\\
\rmi The stable limit $\bar{\emph \bolN}^\dem$ of $(\Ndemeps)$ defined on  $(\bar \Omega , (\bar \G_t) , \R^{\emph\dem})$  is strongly dominated by $ G^\dem$, and  has the $(\bar \G_t)$- averaged intensity
\begin{equation}
 {\Gamma}_t([\bar{\emph{\bolN}}^\dem]_t,\bolmu^\dem) = \int_{\mathbb N^{p}} \bolmu^\dem(t,z) \Gamma_t([\bar{\emph{\bolN}}^\dem]_t,\d z).
\end{equation}
\rmii  As a consequence, the stable limit   $\bar X = Z_0^\natural + \bar N^{b,\natural}- \bar N^{d,\natural}$  of  $(Z^{\epsilon, \natural})$ has  the $(\bar \G_t)$  birth  (resp. death) intensity:
\begin{equation}
{\Gamma}_t([\bar{\emph{\bolN}}^\dem]_{t^-},\mu^{b,\natural})= \sum_{i=1}^p {\Gamma}_t([\bar{\emph{\bolN}}^\dem]_{t^-},\mu^{b,i}) ,
\end{equation}
(resp. ${\Gamma}_t([\bar{\emph{\bolN}}^\dem]_{t^-},\mu^{d,\natural})= \sum_{i=1}^p{\Gamma}_t([\bar{\emph{\bolN}}^\dem]_{t^-},\mu^{d,i})$).
\end{theorem}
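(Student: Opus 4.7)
The plan is to identify the averaged intensity by passing to the limit in the pre-limit compensator identity, exploiting the uniform strong domination by $\bolG^\dem$ and the joint stable convergence in \eqref{Eqjointcvg}. The strong domination $\bbolN^\dem \prec \bolG^\dem$ on $(\bar \Omega,\bar \G,\R^\dem)$ is already Proposition \ref{RelativeCompactnessNdem}(ii), from which $\bbolN^\dem$ admits a $(\bar \G_t)$-intensity bounded by that of $\bolG^\dem$. For the identification, I would use that for each $\epsilon>0$, the process $\Ndemeps_t - \int_0^t \bolmu^\dem(s,Z^\epsilon_s)\d s$ is a $(\G_t,\P)$-local martingale, and by Corollary \ref{CorFamilyStrongDomination} the jump times $(S_p)$ of $\bolG^\dem$ form a localizing sequence that is \emph{uniform in $\epsilon$}, since $\Ndemeps \prec \bolG^\dem$ for every $\epsilon$. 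For any $u\leq t$ and $h \in \cC_b(\cal A^{2p})$, this yields
\begin{equation*}
\E\big[h([\Ndemeps]_u)(\Ndemeps_{t\wedge S_p}-\Ndemeps_{u\wedge S_p})\big] = \E\Big[h([\Ndemeps]_u)\int_{u\wedge S_p}^{t\wedge S_p}\bolmu^\dem(s,Z^\epsilon_s)\d s\Big].
\end{equation*}

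The left-hand side is bounded by $2p\|h\|_\infty$ thanks to $\Ndemeps_{S_p}\leq\bolG^\dem_{S_p}=p$. Stable convergence of $(\Ndemeps)$ toward $\bbolN^\dem$ (along a subsequence if needed) then gives its convergence to $\R^\dem[h([\bbolN^\dem]_u)(\bbolN^\dem_{t\wedge S_p}-\bbolN^\dem_{u\wedge S_p})]$, the Skorohod projections $[\alpha]\mapsto[\alpha]_u$ being continuous outside a $\R^\dem$-null set since jump times are a.s.\ countable. For the right-hand side, Theorem \ref{ThBDSexistence}(iii) gives $Z^{\epsilon,\natural}_s \leq Z_0^\natural+G^{b,\natural}_{S_p}\leq Z_0^\natural+p$ on $[0,S_p]$, so $\bolmu^\dem(s,Z^\epsilon_s)$ is bounded by a $\P$-a.s.\ locally integrable process under Assumption \ref{HypCoxBirthDom}. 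Applying the joint stable convergence in the form \eqref{Eqjointcvg} with the test functional $(\omega,s,[\alpha],z)\mapsto h([\alpha]_u)\bolmu^\dem(\omega,s,z)\indi_{[u\wedge S_p,t\wedge S_p]}(s)$ produces the limit $\R^\dem\big[h([\bbolN^\dem]_u)\int_{u\wedge S_p}^{t\wedge S_p}\Gamma_s([\bbolN^\dem]_s,\bolmu^\dem)\d s\big]$.

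Equating both limits for arbitrary $h,u,t$ yields the $(\bar \G_t,\R^\dem)$-local martingale property of $\bbolN^\dem_{\cdot\wedge S_p}-\int_0^{\cdot\wedge S_p}\Gamma_s([\bbolN^\dem]_s,\bolmu^\dem)\d s$; letting $p\to\infty$ identifies $\Gamma_s([\bbolN^\dem]_s,\bolmu^\dem)$ as the $(\bar \G_t)$-intensity of $\bbolN^\dem$, proving (i). Part (ii) is then immediate by linearity of $\bolnu\mapsto\Gamma_t([\bbolN^\dem]_{t^-},\bolnu)$: writing $\bar X = Z_0^\natural + \bar N^{b,\natural}-\bar N^{d,\natural}$ with $\bar N^{b,\natural}=\sum_{i=1}^p\bar N^{b,i}$ (and similarly for deaths, the components having no common jumps since $\bbolN^\dem$ takes values in $\cal A^{2p}$) gives the announced birth and death intensities of $\bar X$. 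The main obstacle will be the rigorous application of the stable convergence on the right-hand side, where the test functional involves $\bolmu^\dem(\omega,s,z)$ that is only $(\G_s)$-predictable in $(\omega,s)$ with no continuity in these variables; this relies crucially on the discontinuity relaxation of stable convergence (continuity in $z$ being automatic on the discrete space $\N^p$), on the uniform integrability granted by the domination on $[0,S_p]$, and on a careful measurability check ensuring that $s\mapsto\Gamma_s([\bbolN^\dem]_{s^-},\bolmu^\dem)$ is $(\bar \G_t)$-predictable.
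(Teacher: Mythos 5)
Your proposal follows essentially the same route as the paper: the pre-limit compensator identity localized at the jump times $S_p$ of $\bolG^\dem$, uniform integrability supplied by the strong domination, the joint stable convergence \eqref{Eqjointcvg} for the compensator side, and the continuity relaxation of stable convergence for the evaluation maps. Two details you should tighten to match the paper's argument: the test functions must be $\G_u\otimes\F^{\cal A}_u$-measurable in $(\omega,[\alpha])$ (not merely $h\in\cC_b(\cal A^{2p})$) in order to identify the intensity in the product filtration $(\bar\G_t)$, and the continuity of $[\alpha]\mapsto\alpha(t\wedge S_p(\omega))$ at the \emph{random} times $S_p$ does not follow from countability of jump times but from restricting to the set of paths strongly dominated by $\bolG^\dem(\omega)$, which is exactly what the paper's Lemma \ref{LemmaTechContinuity} provides.
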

Uniform integrability assumptions on the compensators are also not necessary here. The latter assumption is replaced by the fact that the sequence  $(S_p)$ of jump times of $\bolG^\dem$ is a localizing sequence for all local martingales $(\Ndemeps - A^\epsilon)$ (Lemma \ref{CorFamilyStrongDomination}). 
 
Another advantage of this approach  is that all   results can be extended directly to the case when  event intensities are path-dependent functionals of the demographic counting processes (and thus of the aggregated population).
\begin{proof}[\bol{Proof of Theorem \ref{ThIdentificationResult}}]
Let $(\R^\dem,\Gamma)$ be a stable limit of $({\emph{\bolN}}^{\dem,\epsilon}, \tilde Z^\epsilon)$. Then,  there exists a subsequence $\epsilon_k \rightarrow 0 $ along which $(( \bolN^{\dem,\epsilon_k}, \tilde Z^{\epsilon_k}))$ converges stably  to $(\R^\dem,\Gamma)$.\\
Let $ 0\leq u<  t$ and  $h_u\in C_{bmc}(\Omega\times \cal A^{2p})$ a $\G_{u}\otimes \F^{\cal A}_{u}$-measurable function ($h_u([\alpha]) = h_u([\alpha]_u)$).  By Lemma \ref{CorFamilyStrongDomination}, for all $p\geq 0$ $(\bolN^{\dem,\epsilon_k}_{\cdot \wedge S_p} - A^{\epsilon_k}_{\cdot \wedge S_p})$ is a martingale, so that:
\begin{equation}
\label{ProofMartingaleProp}  \E[h_u(\bolN^{\dem,\epsilon_k}) (\bolN^{\dem,\epsilon_k}_{t\wedge S_p} -  \bolN^{\dem,\epsilon_k}_{u\wedge S_p})]  = \E [h_u(\bolN^{\dem,\epsilon_k}) \int_{u\wedge S_p}^{t\wedge S_p} \bolmu^\dem(s,Z^{\epsilon_k}_s)\d s ].  
\end{equation} 

\bol{Step 1} Let $H([\alpha]) = h_u([\alpha])(\alpha(t\wedge S_p) -\alpha(u\wedge S_p))$. The left hand side of \eqref{ProofMartingaleProp} is equal to $\E[H(\Ndemeps)]$.
$H(\omega,\cdot)$ is not  bounded.  However, for all $\epsilon>0$, $\vert H( \Ndemeps) \vert \leq \Vert h_u\Vert_{\infty}(\bolG^\dem_{t\wedge S_p}-\bolG^\dem_{u \wedge S_p})$ which is in $\L^1(\Omega,\G,\P)$, and thus $(H( \Ndemeps))$ is  uniformly integrable.\\
By Lemma \ref{LemmaTechContinuity}, the continuity assumption can be replaced by the fact that $H(\omega,\cdot)$ is continuous on the  space of  paths $[\alpha] \prec G^\dem(\omega)$: 
$$A= \{(\omega,[\alpha]) \in \Omega \times \cal A^{2p} ; \; [\alpha] \prec  G^\dem(\omega) \text{ and }\Delta \alpha(t) = \Delta \alpha (u) = 0\}.$$
 Since $\Ndemeps \prec \bolG^\dem$ and $\bar \bolN^\dem \prec \bolG^\dem$ (Proposition \ref{RelativeCompactnessNdem}), $\R^\epsilon(A)=\E[\indi_A(\Ndemeps)] =\R^\dem(A)= 1$. 
Thus,  the stable convergence of $(\bolN^{\dem,\epsilon_k})$ to $\R^\dem$ can be applied to $H$. \\

\bol{Step 2} Now, let $\tilde g(s,[\alpha],z) = \indi_{]u \wedge S_p,t\wedge S_p]}(s) e^s h_u([\alpha]_u) \bolmu^\dem(s,z)$. The right hand side of 
\eqref{ProofMartingaleProp} is equal to $\tilde \E [\tilde g(\tilde \bolN^{\dem, \epsilon_k},\tilde 
Z^{\epsilon_k} )]$. \\
 Let $\beta^\dem(s, G^{b,\natural})$ be the intensity process of $\bolG^\dem$,  defined in Proposition \ref{Birth event process}.
Since $\Ndemeps \prec G^\dem$  and by Assumption \ref{HypCoxBirthDom}, $ \bolmu^\dem(s,Z^{\epsilon_k}_s) \leq  \beta^\dem(s, G_s^{b,\natural})$, and thus 
 $$ \tilde \E[\vert  \tilde{g}([\tilde{N}^{\dem,\epsilon_k}],\tilde{Z}^{\epsilon_k})\vert ]  \leq  \Vert h_u\Vert_{\infty} \E[\int_{u\wedge S_p}^{t \wedge S_p} \beta^\dem(s,G_s^{b,\natural})\d s] \leq  p\Vert h_u\Vert_{\infty}.$$
 
\bol{Step 3} Hence, the sequence $(\tilde g(\tilde \bolN^{\dem, \epsilon_k},\tilde 
Z^{\epsilon_k} ))_\epsilon$ is uniformly integrable,  and  the joint stable convergence  (Lemma \ref{LemmaJointCvg}) can be applied to $\tilde{g}$.  By taking the stable limits in both sides of Equation \eqref{ProofMartingaleProp}, we obtain that:
\begin{equation*}
\R^\dem [h_u(\bar \bolN^\dem) (\bar \bolN^\dem_{t\wedge S_p} -\bar \bolN^\dem_{u\wedge S_p})]  = \R ^\dem[h_u(\bar \bolN^\dem)  \int_{u\wedge S_p}^{t\wedge S_p} \int_{\N^{p}}\bolmu^\dem(s,z){\Gamma}(s,[\bar \bolN^\dem]_s,\d z)\d s ],
\end{equation*} 
for all $\G_u\otimes \cal F^{A}_u$-measurable $h_u$, which achieves the proof.
\end{proof}
\subsubsection{Properties of  the kernels $\Gamma$} First,  the relation $Z^{\epsilon,\natural} = \sum_{i=1}^p Z^i = Z_0^\natural +  N^{b,\natural,\epsilon} - N^{d,\natural,\epsilon}$ between the aggregated population and  $\Ndemeps$, is translated into a support property for $\Gamma$. The space of populations of size $n$ is denoted by $\U_n =\{ z \in \N^p \; ; \sum_{i=1}^p z^i = n\}$. 
\begin{corollary}
\label{CorGammaSupport}
Let $(\R^\dem, \Gamma)$ be a stable limit of  $( \bolN^{\dem, \epsilon}, \tilde Z^\epsilon)$, and $\bar X = Z_0^\natural + \bar N^{b,\natural} - \bar N^{d,\natural}$ the limit aggregated process. The support of $(\Gamma_s)$ is included in $\U_{\bar X_s} $, i.e.
\begin{equation}
\label{supportGamma}
\Gamma_s([\bar{\emph{\bolN}}^\dem]_s,\U_{\bar X_s})=1 \quad \R^{\emph{\dem}} \otimes \d s \; a.s.,
\end{equation}
\end{corollary}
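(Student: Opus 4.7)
The plan is to transport the pre-limit pathwise identity $\tilde Z^{\epsilon,\natural}(\omega,s)=Z_0^\natural(\omega)+N^{b,\natural,\epsilon}_s(\omega)-N^{d,\natural,\epsilon}_s(\omega)$ to the stable limit, by applying the joint stable convergence of $(\bolN^{\dem,\epsilon},\tilde Z^\epsilon)$ to a well-chosen test function that encodes this relation.

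Concretely, I would fix a bounded non-negative continuous function $f:\mathbb Z\to\mathbb R^+$ vanishing only at $0$, for instance $f(x)=1\wedge|x|$, and define
\begin{equation*}
\tilde F(\omega,s,[\alpha],z):=f\big(z^\natural-Z_0^\natural(\omega)-\alpha^{b,\natural}_s+\alpha^{d,\natural}_s\big).
\end{equation*}
The pre-limit identity gives $\tilde F(\omega,s,\tilde\bolN^{\dem,\epsilon}(\omega,s),\tilde Z^\epsilon(\omega,s))=f(0)=0$ pathwise, so $\tilde\E[\tilde F]=0$ for every $\epsilon>0$. Passing to the joint stable limit and using the disintegration formula derived in the previous subsection, the identity should persist in the form
\begin{equation*}
0=\R^\dem\Big[\int_0^\infty\int_{\mathbb N^p} f\big(z^\natural-\bar X_s\big)\,\Gamma_s([\bar\bolN^\dem]_s,\d z)\,\lambda^e(\d s)\Big],
\end{equation*}
where $\bar X_s=Z_0^\natural+\bar N^{b,\natural}_s-\bar N^{d,\natural}_s$ by Proposition \ref{RelativeCompactnessNdem}(iii). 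Since $\lambda^e$ has strictly positive density and $f$ vanishes only at $0$, this forces $\Gamma_s([\bar\bolN^\dem]_s,\{z:z^\natural=\bar X_s\})=1$ for $\R^\dem\otimes\d s$-almost every $(\omega,s)$, which is precisely \eqref{supportGamma} since $\{z\in\mathbb N^p:z^\natural=\bar X_s\}=\U_{\bar X_s}$.

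The main obstacle is that $\tilde F$ is not jointly continuous in $(s,[\alpha])$: the evaluation $[\alpha]\mapsto\alpha_s$ is discontinuous precisely at paths jumping at time $s$, so $\tilde F$ a priori falls outside the standard class $\cC_{bmc}(\tilde\Omega\times\cal A^{2p}\times\mathbb N^p)$. I would resolve this via the almost-sure continuity relaxation (ii) stated after Definition \ref{StableCvgScdViewpoint}: on the set
\begin{equation*}
A=\{(\omega,s,[\alpha],z)\in\tilde\Omega\times\cal A^{2p}\times\mathbb N^p:\Delta\alpha(s)=0\},
\end{equation*}
the functional $\tilde F$ is continuous in $([\alpha],z)$, and $A$ has full measure under both the pre-limit laws and the limit rule, since any $\alpha\in\cal A^{2p}$ has at most countably many jump times whereas the $s$-component admits the density $e^{-s}$ with respect to Lebesgue measure. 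This would legitimate the application of the joint stable convergence to $\tilde F$ and close the argument.
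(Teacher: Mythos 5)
Your proof is correct and rests on the same core mechanism as the paper's: the pathwise identity $Z^{\epsilon,\natural}_s=Z_0^\natural+N^{b,\natural,\epsilon}_s-N^{d,\natural,\epsilon}_s$ holds exactly for every $\epsilon$, and one transports it to the limit rule through the joint stable convergence of $(\tilde\bolN^{\dem,\epsilon},\tilde Z^\epsilon)$ and the disintegration $\tilde\R=\tilde\R^{\dem}\,\Gamma$. The only divergence is the device used to pass to the limit: the paper applies the stable-convergence extension of the portmanteau theorem directly to the indicator $\indi_F$ of the set $F=\{z^\natural=Z_0^\natural+\alpha^{b,\natural}(s)-\alpha^{d,\natural}(s)\}$, whereas you replace $\indi_F$ by the continuous surrogate $f(x)=1\wedge|x|$ composed with the defect, and then invoke the almost-sure continuity relaxation on $A=\{\Delta\alpha(s)=0\}$. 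Your variant buys a bit more rigor at the point the paper leaves implicit: the evaluation $[\alpha]\mapsto\alpha(s)$ is discontinuous at paths jumping at $s$, so $F$ is not closed in the product topology and the bare portmanteau statement needs exactly the full-measure argument you supply ($\alpha$ has countably many jumps while the $s$-marginal is $\lambda^e\ll\d s$, so $A$ carries full mass under every $\tilde\R^\epsilon$ and under $\tilde\R$). The final step, deducing $\Gamma_s([\bar\bolN^\dem]_s,\U_{\bar X_s})=1$ $\R^{\dem}\otimes\d s$-a.s.\ from the vanishing of a nonnegative integral whose integrand vanishes only on $\U_{\bar X_s}$, is sound. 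No gap.
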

\begin{proof}
Let $F= \{(\omega,s,\alpha,z); \;  z^\natural = Z_0^\natural(\omega)+ \alpha^{b,\natural}(s) -\alpha^{d,\natural}(s)\}$,  We have $\forall \;  \epsilon >0$:
\begin{equation*}
\tilde \E[\indi_F(\cdot,\tilde \bolN^{\dem,\epsilon}, \tilde{Z}^\epsilon)]=\E[\int_0^\infty \indi_{\{Z^{\epsilon,\natural}_s = Z_0^\natural +  N^{b,\natural,\epsilon}_s - N^{d,\natural,\epsilon}_s\}}\lambda^e(\d s)]= 1.
\end{equation*}
By the extension of the portemanteau theorem to the stable convergence (\cite{JacodMemin1981}, \cite{castaing2004young}), 
we obtain that\\
$\R^\dem [\int_0^\infty\Gamma(\cdot,s,[\bar \bolN^\dem]_s,\U_{\bar Z^\natural_s})\lambda^e(\d s) ]=1$.
\end{proof}
\noindent The second property expresses that limit kernels must cancel the effect of fast  swap events.  
The martingale problem verified by the two timescale BDS population supports the intuition. 

For a BDS intensity functional $\bolmu =(\mu_\gamma)_{\gamma \in \J}$, a random operator  $(L_t^\sw)_{t\geq 0}$  (resp. $ (L^\dem_t)_{t\geq 0}$) can be associated with swap events (resp. demographic events),
\begin{equation}
L^\sw_t f(z) = \sum_{i,j =1 \atop i\neq j}^p\big( f(z+\phi(i,j)) - f(z)\big)\mu^{(i,j)}(t,z), \quad \forall t \geq 0, \; z \in \N^p,
\end{equation} 
(reps. $L^\dem_t f(z) = \sum_{j=1}^p \big( f(z+\bolej) - f(z)\big)\mu^{(b,i)}( t,z) +  \sum_{i=1}^p \big( f(z-\bolei) - f(z)\big)\mu^{(d,i)}( t,z)$).\\

\noindent For any bounded function $f$ on $\N^p$, 
\begin{equation*}
f(Z^\epsilon_t) - f(Z_0) - \int_0^t L^\dem_s f(Z_s^\epsilon) \d s  - \frac{1}{\epsilon} \int_0^t  L^\sw_s f(Z_s^\epsilon) \d s \text{ is a $(\G_t)$-local martingale.}
\end{equation*}
Thus, the  swap  part explodes 
when $\epsilon \to 0$.   At the limit   $(L_s^\sw f)$ is averaged against a stable limit $(\Gamma_s)$ of $\tilde{Z}^\epsilon$, and  intuitively, this means that $\Gamma$ must satisfy $\Gamma_s ([\bar N^\dem]_s, L_s^\sw f) =0$. An interpretation of this property  in terms of invariant measures for pure Swap processes is detailed in the next Section. 
\begin{proposition}
\label{ThGammaInvariant}
Let $(\R^\dem, \Gamma)$ be a stable limit of  $( \bolN^{\dem, \epsilon}, \tilde Z^\epsilon)$, and $\bar X= Z_0^\natural + \bar N^{b,\natural}- \bar N^{d,\natural}$ the limit aggregated process. Then, for all bounded functions $f$ on $\mathbb N^p$, 
\begin{equation}
\label{AveragingGamma}
\Gamma_s ([\bar N^\dem]_s, L_s^\sw f) = \sum_{z \in \U_{\bar X_s}} L_s^\sw f(z)\Gamma_s([\bar N^\dem]_s, \d z) = 0, \quad \R^\dem \otimes \d s \; a.s.
\end{equation}
\end{proposition}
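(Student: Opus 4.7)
The plan is to apply the martingale problem satisfied by $(Z^\epsilon)$ at the pre-limit level, rescale by $\epsilon$ to isolate the fast swap generator, and transfer the vanishing of this term to the stable limit via the joint convergence $((\bolN^{\dem,\epsilon},\tilde Z^\epsilon))\to(\R^\dem,\Gamma)$ established in Theorem \ref{ThIdentificationResult}. For any bounded $f$ on $\N^p$, Dynkin's formula shows that
$$M_t^{\epsilon,f}=f(Z^\epsilon_t)-f(Z_0)-\int_0^t L^\dem_s f(Z^\epsilon_s)\,\d s-\frac{1}{\epsilon}\int_0^t L^\sw_s f(Z^\epsilon_s)\,\d s$$
is a $(\G_t)$-local martingale. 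Localizing by the sequence $(S_p)$ of jump times of $\bolG^\dem$, the domination $Z^{\epsilon,\natural}\leq Z_0^\natural+G^{b,\natural}$ from Theorem \ref{ThBDSexistence} gives $Z^{\epsilon,\natural}\leq Z_0^\natural+p$ on $[0,S_p]$, hence all nontrivial terms of $\epsilon M^{\epsilon,f}_{\cdot\wedge S_p}$ are dominated in $L^1(\P)$ by an $\epsilon$-independent quantity (the $1/\epsilon$ is precisely compensated by the $\epsilon$ prefactor), and $\epsilon M^{\epsilon,f}_{\cdot\wedge S_p}$ is a true martingale.

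Testing against an arbitrary $h_u\in\cC_{bmc}(\Omega\times\cal A^{2p})$ of the form $h_u(\omega,[\alpha])=h_u(\omega,[\alpha]_u)$ with $u<t$ (so that $h_u(\bolN^{\dem,\epsilon})$ is $\G_u$-measurable), the martingale identity rewrites as
$$\E\!\left[h_u(\bolN^{\dem,\epsilon})\int_{u\wedge S_p}^{t\wedge S_p}L^\sw_s f(Z^\epsilon_s)\,\d s\right]=\epsilon\,\E\!\left[h_u(\bolN^{\dem,\epsilon})\Big(f(Z^\epsilon_{t\wedge S_p})-f(Z^\epsilon_{u\wedge S_p})-\int_{u\wedge S_p}^{t\wedge S_p}L^\dem_s f(Z^\epsilon_s)\,\d s\Big)\right].$$
Using $|L^\dem_s f(z)|\leq 2\|f\|_\infty(\hat\mu^{b,\natural}+\hat\mu^{d,\natural})(s,z^\natural)$ together with the integrable bound on the dominating compensator, the right hand side is $O(\epsilon)$ and hence vanishes. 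The left hand side is rewritten on $(\tilde\Omega,\mathcal{O},\tilde\P)$ as $\tilde\E[\tilde g(\tilde\bolN^{\dem,\epsilon},\tilde Z^\epsilon)]$ with
$$\tilde g(\omega,s,[\alpha],z)=\indi_{]u\wedge S_p(\omega),\,t\wedge S_p(\omega)]}(s)\,e^s\,h_u(\omega,[\alpha]_u)\,L^\sw_s f(z).$$
The strong domination $\tilde Z^{\epsilon,\natural}\leq Z_0^\natural+\tilde G^{b,\natural}$ and $|L^\sw_s f|\leq 2\|f\|_\infty\hat\mu^{s,\natural}$ supply uniform integrability of $(\tilde g(\tilde\bolN^{\dem,\epsilon_k},\tilde Z^{\epsilon_k}))$; the discontinuities of $\tilde g$ are confined to the $\R^\dem$-null set of paths jumping at $u$ or $t$ or not strongly dominated by $\bolG^\dem$ (Proposition \ref{RelativeCompactnessNdem}(ii)), so the extended stable convergence argument used in the proof of Theorem \ref{ThIdentificationResult} applies. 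Passing to the limit gives
$$\R^\dem\!\left[h_u(\bar\bolN^\dem)\int_{u\wedge S_p}^{t\wedge S_p}\Gamma_s([\bar\bolN^\dem]_s,L^\sw_s f)\,\d s\right]=0,$$
and the arbitrariness of $h_u$, $u<t$ and $p$, together with a monotone class argument, yields $\Gamma_s([\bar\bolN^\dem]_s,L^\sw_s f)=0$ $\R^\dem\otimes\d s$-a.s. Restriction of the sum to $z\in\U_{\bar X_s}$ follows from the support property of $\Gamma$ in Corollary \ref{CorGammaSupport}.

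The main obstacle is reconciling the $1/\epsilon$ blow-up of the swap generator with the uniform integrability required by the joint stable convergence. The key is that multiplying the martingale by $\epsilon$ does two things simultaneously: it normalizes the swap compensator to an $\epsilon$-independent quantity integrably controlled by the jump times of $\bolG^\dem$, and it forces the demographic remainder to vanish in the limit. Once this double role of the prefactor is in place, no additional regularity of $\bolmu$ in $(\omega,t)$ is needed, and the argument is essentially a variant of the one used to identify the averaged demographic intensity in Theorem \ref{ThIdentificationResult}, with $L^\sw_s f$ replacing $\bolmu^\dem$ on the left.
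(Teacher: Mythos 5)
Your proposal is correct and follows essentially the same route as the paper: apply the localized martingale identity for $f(Z^\epsilon)$ tested against a $\G_u\otimes\F^{\cal A}_u$-measurable $h_u$, multiply by $\epsilon$ so that the demographic and boundary terms vanish while the swap compensator survives, and pass to the limit via the joint stable convergence of $(\bolN^{\dem,\epsilon},\tilde Z^\epsilon)$ with the same domination-based uniform integrability used for Theorem \ref{ThIdentificationResult}. The only cosmetic difference is the final step, where the paper concludes by observing that $\int_0^{t}\Gamma_s([\bar\bolN^\dem]_s,L_s^\sw f)\,\d s$ is a finite-variation local martingale and hence null, whereas you invoke the arbitrariness of $h_u$, $u$, $t$, $p$ and a monotone class argument; both yield the same conclusion.
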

\begin{proof}
Let $f \in \cC_b(\N^p)$, $0 \leq u<t$ and $h_u \in \cC_{bmc}(\Omega \times \cal A^{2p})$ a $\G_u \otimes \F^{\cal A}_u$-measurable functional.
\begin{align*}
 & \E[h_u(\Ndemeps)  \left(f(Z^\epsilon_{t\wedge S_p}) - f(Z^\epsilon_{u\wedge S_p})\right)]  =\\  
 & \E[h_u(\Ndemeps) \int_{u\wedge S_p}^{t\wedge S_p}L_s^\dem f(Z^\epsilon_s)\d s + h_u(\Ndemeps)\frac{1}{\epsilon} \int_{u\wedge S_p}^{t\wedge S_p}L_s^\sw f(Z^\epsilon_s)\d s].
\end{align*}
By multiplying the equation  by $\epsilon$ and letting $\epsilon\rightarrow 0$ along the appropriate subsequence  we obtain by
using the same uniform integrablity argument as in the proof of Theorem \ref{ThIdentificationResult}:
$$\R^\dem [h_u(\bar \bolN^\dem)\int_{u\wedge S_p}^{t\wedge S_p} \Gamma_s(  [\bar \bolN^\dem]_s, L_s^\sw f ) \d s ]=0.$$
The foregoing yields that $\int_0^t \Gamma_s([ \bar \bolN^\dem]_s, L_s^\sw f ) \d s $ is a $(\bar \G_t)$-local martingale with finite variations, and is thus the null process, so that: 
\begin{equation*}
\Gamma_s( [\bar \bolN^\dem]_s, L_s^\sw f)    =0, \quad \forall f\in \cal C_b(\N^p) \quad \R^\dem \otimes \d s \text{ a.s}.
\end{equation*}
\end{proof}
\section{Convergence in distribution of the demographic processes in random environment}${}$\\
\label{SectionWeakConvergence}

\noindent In Section \ref{SectionCvgDemSystem}, we  showed that stable limits $\bar \bolN^\dem$ 
of the demographic counting processes $(\Ndemeps)$ are strongly dominated by $\bolG^\dem$ and have an  intensity of form $\Gamma_t([\bar \bolN^\dem]_t, \bolmu^\dem)$, where the averaging kernel $(\Gamma_s)$ verifies the support constraints \eqref{supportGamma} and the averaging constraint \eqref{AveragingGamma}. 

These results can be applied to identify limit distributions of $(\Ndemeps)$, since all limits for the convergence in distribution can be realized as stable limits $\bar \bolN^\dem$,  and thus inherit the  properties cited above. 
To conclude this paper, we give such  an  application, under an additional assumption yielding  the uniqueness of  the averaging kernels $\Gamma$. As a consequence,  the aggregated ``macro'' population processes converge in distribution toward a birth-death process in random environment. We close this section with  a toy model, showing the emergence of nonlinearity  in  mortality rates in the presence of fast swap events.
%
 \subsection{Main result}
We first come back to the interpretation of Proposition  \ref{ThGammaInvariant}  in terms of invariant measures for pure Swap processes.
 \subsubsection{Markov Pure Swap process with ``frozen environment''}  Pure Swap processes are population processes in which  only swap events occur.   For instance, when  swap intensity functionals $\mu^{(i,j)}$ are deterministic functions $\mu^{i,j} : z \in \N^p \rightarrow \mu^{(i,j)}(z) $, a pure Swap process $S$ of intensity functions $(\mu^{(i,j)})$ is a Continuous Time Markov Chain (CMTC), of generator $\mathcal L = L^\sw$. Since swap events don't change the size of the population, if   $S_0 \in \U_n$, then  $S$ remains in the space $\U_n$ of populations of size $n$ at all time. In this  particular case,  Proposition  \ref{ThGammaInvariant}  means that   $\Gamma_s([\bar{\emph{\bolN}}^\dem]_s,\cdot)$, whose support is $\U_{\bar X_s}$, is an {\em invariant measure} of the Markov Swap $S$ starting with an initial population of size $\bar X_s = Z_0^\natural + N^{b,\natural}_s - N^{d,\natural}_s$. 
 
 In the general case,  $(L^\sw_s)$ is a random time-dependent operator. However, for a fixed $(\omega,s)$, $\cal L = L^\sw_s(\omega)$  can be seen as the generator  of a  fictitious Markov pure Swap, where the random environment has been frozen at time $s$,  with intensity functions  $g^{(i,j)}(z) = \mu^{(i,j)}(s,z)$. Thus,  Proposition \ref{ThGammaInvariant} can be interpreted as  $\Gamma_s([\bar{\emph{\bolN}}^\dem]_s,\cdot)$ being an invariant measure of the  Swap CTMC of generator $\mathcal{L}$ with frozen environment,  starting with an initial population of size $\bar X_s$, this being true a.s. for all  fixed $(\omega,s)$.
 \subsubsection{Main assumption and convergence result} 
We now assume that all states of the finite space $\U_n$ are attainable by the Swap  CTMC of generator $\cal L = L_s^\sw(\omega)$, so  that each fictitious Markov  Swap process  has  a unique  invariant measure on each finite space $\U_n$. 
This    assumption  is essentially a structure property for the model, depending only on the swap intensity functional $\bolmu^s$.
\begin{hyp}
\label{HypInvariantMeasure}
For each population size $n$,
there exists a unique 
predictable probability kernel $\pi_s(\omega,n,\d z)$ on $(\Omega, \G, \P)$ with support in $\U_n$,  such that  for all $f: \U_n \mapsto \mathbb R$, 
\begin{equation}
\label{condconvergence}
\pi_s( n, L_s^\sw f) =0  ,  \quad  \P \otimes \d s \;  a.s. 
\end{equation}
\end{hyp}
Assumption \ref{HypInvariantMeasure}  yields that    $\Gamma_s([\bar \bolN^\dem]_s,\cdot) =\pi_s(\bar X^\natural_s, \cdot)$,  for any joint stable limit $(\R^\dem,\Gamma)$. In particular,  under  Assumption \ref{HypInvariantMeasure}   limit kernels $\Gamma_s$ only depend on $[\bar \bolN^\dem]_s$ through  the aggregated population $\bar X_s$.  This assumption is  a sufficient assumption for  the demographic processes to converge in distribution. 

\begin{theorem}[Convergence in distribution  of the demographic processes]
\label{weak convergence}${}$\\
\rmi Under Assumption \ref{HypInvariantMeasure}, $(\emph{\bolN}^{\dem,\epsilon})$ converges in distribution in $\cal A^{2p}$. 
The limit distribution can be realized on the initial probability space $(\Omega,(\G_t),\P)$, as the distribution of  the $2p$-multivariate counting process   $\cal N^\dem = (\cal N^b, \cal N^d)$,  solution of:
\begin{equation}
\label{EqNdemLimit}
\d  \cal N^\dem_t = \bolQ^\dem (\d t, ]0,\pi_t ( X_{t^-},\bolmu^\dem) ]), \quad X_t = Z_0^\natural +  \cal N^{b,\natural}_t -\cal N^{d,\natural}_t.
\end{equation} 
\rmii The aggregated population processes $(Z^{\epsilon,\natural})$ converge in distribution to the   Birth-Death process  in random environment $X$, with random $(\G_t)$-birth and death intensities functionals defined by 
\begin{equation}
\pi_t(n, \mu^{b,\natural}) = \int_{\U_{n}}\mu^{b,\natural}(t,z){\pi}_t(n,\d z) \text{ and} \quad \pi_t(n, \mu^{d,\natural}) = \int_{\U_n}\mu^{d,\natural}(t,z){\pi}_t(n,\d z), \forall n \geq 0.
\end{equation}
\end{theorem}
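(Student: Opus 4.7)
The plan is to combine the stable relative compactness of $(\bolN^{\dem,\epsilon},\tilde Z^\epsilon)$ established in Section \ref{SectionCvgDemSystem} with the averaging identification of Theorem \ref{ThIdentificationResult} and Proposition \ref{ThGammaInvariant} to pin down every subsequential stable limit in law, and then to invoke Corollary \ref{UniquenessDomination} to force convergence of the full sequence to the law of the candidate $\cal N^\dem$. First I would check that Equation \eqref{EqNdemLimit} admits a unique non-explosive solution $\cal N^\dem$ on $(\Omega,(\G_t),\P)$. The intensity functional
\[
\bar{\bollambda}^\dem(\omega,t,\bolnu^\dem) := \pi_t\bigl(\omega,\, Z_0^\natural(\omega) + \nu^{b,\natural}-\nu^{d,\natural},\, \bolmu^\dem(\omega,t,\cdot)\bigr)
\]
depends only on $\bolnu^\dem$, and since $\pi_t(\omega,n,\cdot)$ is supported in $\U_n$ it is bounded by $\hat{\bolmu}^\dem(t,n)$ from \eqref{maxsize}, hence strongly ordered with the intensity functional of $\bolG^\dem$. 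Proposition \ref{comparison point process} then yields existence, uniqueness, and $\cal N^\dem \prec \bolG^\dem$.

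Next, along any subsequence for which $(\bolN^{\dem,\epsilon},\tilde Z^\epsilon)$ converges stably to some $(\R^\dem,\Gamma)$ (existence of such subsequences is guaranteed by Proposition \ref{RelativeCompactnessNdem} together with the strong domination of $\tilde Z^{\epsilon,\natural}$ by $Z_0^\natural + \tilde G^{b,\natural}$), Corollary \ref{CorGammaSupport} tells us that $\Gamma_s([\bar{\bolN}^\dem]_s,\cdot)$ is supported in $\U_{\bar X_s}$, while Proposition \ref{ThGammaInvariant} asserts that it annihilates $L_s^\sw f$ for every bounded $f$ on $\N^p$. For fixed $(\omega,s)$ these two facts say that $\Gamma_s(\omega,[\bar{\bolN}^\dem]_s(\alpha),\cdot)$ is an invariant probability measure on $\U_{\bar X_s(\omega,\alpha)}$ for the random generator $L_s^\sw(\omega)$, and Assumption \ref{HypInvariantMeasure} then forces
\[
\Gamma_s(\omega,[\bar{\bolN}^\dem]_s(\alpha),\cdot) = \pi_s(\omega,\bar X_s(\omega,\alpha),\cdot), \quad \R^\dem\otimes\d s\text{-a.s.}
\]
Inserting this into Theorem \ref{ThIdentificationResult} yields that $\bar{\bolN}^\dem$ is a $(\bar{\G}_t)$-counting process strongly dominated by $\bolG^\dem$, with $(\bar{\G}_t)$-intensity $\pi_t(\bar X_{t^-},\bolmu^\dem)$, i.e. with intensity functional exactly $\bar{\bollambda}^\dem$ evaluated at $[\bar{\bolN}^\dem]_{t^-}$. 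Corollary \ref{UniquenessDomination}, applied to $\cal N^\dem$ on $(\Omega,\G,\P)$ and to $\bar{\bolN}^\dem$ on $(\bar\Omega,\bar\G,\R^\dem)$ (both strongly dominated by $\bolG^\dem$ with the same intensity functional $\bar{\bollambda}^\dem$), then shows they share the same distribution on $\cal A^{2p}$. Since every subsequential limit produces this same $\cal A^{2p}$-marginal, the full sequence $(\bolN^{\dem,\epsilon})$ converges in distribution to $\mathrm{Law}(\cal N^\dem)$.

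Part (ii) follows by applying the continuous mapping theorem to the continuous linear map $[\alpha]\mapsto Z_0^\natural + \alpha^{b,\natural}-\alpha^{d,\natural}$ from $\cal A^{2p}$ to the Skorokhod space of $\N$-valued càdlàg paths, giving $Z^{\epsilon,\natural}\Rightarrow \cal X := Z_0^\natural + \cal N^{b,\natural}-\cal N^{d,\natural}$; the aggregated birth and death intensities read off \eqref{EqNdemLimit} by linearity of $z\mapsto\pi_t(n,\mu^{b,j})$ in $j$. The main obstacle I anticipate is the measurable identification step: Assumption \ref{HypInvariantMeasure} provides uniqueness of the invariant kernel on $(\Omega,\G,\P)$, whereas $\Gamma$ is a priori an $(\omega,s,[\alpha])$-dependent random kernel on the enlarged space. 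One must verify carefully that the pointwise invariance identity lifts to a $\R^\dem\otimes\d s$-a.s. identification, exploiting that the swap generator $L_s^\sw$ and the dominating functional depend only on $(\omega,s)$, that $s\mapsto \bar X_s$ admits a $(\bar\G_t)$-predictable version, and that the uniqueness in Assumption \ref{HypInvariantMeasure} is stable under disintegration. Once this selection issue is settled, the rest of the argument is a clean application of the strong-domination and uniqueness machinery of Section \ref{SectionBDSDiffEquation}.
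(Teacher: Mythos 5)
Your proposal is correct and follows essentially the same route as the paper: identification of every subsequential stable limit's intensity via Theorem \ref{ThIdentificationResult}, Corollary \ref{CorGammaSupport}, Proposition \ref{ThGammaInvariant} and the uniqueness in Assumption \ref{HypInvariantMeasure}, then Corollary \ref{UniquenessDomination} combined with strong domination by $\bolG^\dem$ to conclude; the measurable-selection concern you raise is handled in the paper exactly as you anticipate, by intersecting full-measure sets on which the support, annihilation and uniqueness properties hold simultaneously. The only cosmetic difference is that you construct $\cal N^\dem$ first and compare each limit to it, whereas the paper first shows all stable limits coincide and then verifies $\cal N^\dem$ is well-defined (note that the birth part of the dominating functional comes from Assumption \ref{HypCoxBirthDom} rather than \eqref{maxsize}, which only covers deaths and swaps).
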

\begin{remark}
\rmi The aggregated population converges toward an ``autonomous'' Birth-Death process in random environment.  In particular,  swap events generates a non-linear dependence of aggregated birth and death intensities on the population size, which are averaged against  invariant measures $\pi_t(n,\cdot)$. These averaging measures also depend non-trivially on the random environment.\\
\rmii In practice, it might be difficult to keep track of swap events,  and thus to have a precise estimate of the number of individual in each subgroup. The approximation of the demographic counting process  by $\mathcal{N}^\dem$  is useful since  it allows demographic rates to be approximated by functional depending only on the global size of the population.
\end{remark}
The proof of Theorem \ref{weak convergence}  relies on two ingredients. The first one is Corollary \ref{CorGammaSupport} and  Theorem  \ref{ThGammaInvariant}, which allow us to show that under Assumption \ref{HypInvariantMeasure}, all stable limits for the demographic processes have the same intensity functional.   
The second ingredient is the Corollary \ref{UniquenessDomination},  which  allows us to conclude since all stable limits have the same intensity functional {\em and} are are strongly dominated  by $\bolG^\dem$ (Proposition \ref{RelativeCompactnessNdem}), and thus have the same distribution, by Corollary \ref{UniquenessDomination}.
\begin{proof}[Proof of Theorem \ref{weak convergence}] 
 Let's prove that all stable limits $\R^\dem$ of $(\Ndemeps)$ have the same marginal on  $\mathcal A^{2p}$, or in other words, that  $\bar \bolN^\dem = (\bar N^b , \bar N^d)$  has the same distribution under all  limit rules $\R^\dem$. 
Let $\R^\dem$  be such a rule, and $\bar X = Z_0^\natural + \bar N^{b,\natural} - \bar N^{d,\natural}$ the limit aggregated process.\\
By Theorem \ref{ThIdentificationResult}, there exits a random kernel $\Gamma$ such that $\bar \bolN^\dem$ has the $(\bar \G_t)$ intensity $\Gamma_t([\bar \bolN^\dem]_t, \bolmu^\dem)$. By Corollary \ref{CorGammaSupport}  and Proposition  \ref{ThGammaInvariant}, there exists $B \in \mathcal{O} \otimes \mathcal{F}^{\cal A}_\infty$ with 
$\tilde \R^\dem(B) =1$, such that
$$\forall \; (\omega,s, [\alpha]) \in B, \quad \Gamma_s([\bar \bolN^\dem]_s,\U_{\bar X_s}) =1 \quad \text{and} \quad  \Gamma_s( [\bar N^\dem]_s, L_s^\sw f) = 0, \quad \forall \; f  \in \cC_{b}(\N^p).$$
By Assumption  \ref{condconvergence},  there also exists $ A \in \mathcal O$ with $\tilde P(A) = \tilde \R^\dem(A) =1$, such that $\forall \; (\omega,s) \in A$ and  $n \geq 0$, $\pi_s$ is the unique  measure verifying 
$$ \pi_s(n,\U_{n}) =1, \quad \text{and} \quad  \pi_s(n, L_s^\sw f) = 0, \quad \forall \; f  \in \cC_{b}(\N^p).$$
Thus, by setting $n = \bar X_s([\alpha])$,
$$\Gamma_s = \pi_s(\bar X_s , \cdot ) \quad \forall  \;  (\omega,s, [\alpha]) \in B\cap A.$$
Hence,  $\bar \bolN^\dem$ have the $\bar \G_t$ intensity functional $(\pi_t(\cdot,  \bolmu^\dem))$ and is strongly dominated by $\bolG^\dem$ (Proposition \ref{RelativeCompactnessNdem}, (ii)). 
By a direct application of Corollary  \ref{UniquenessDomination},  this means that  $\bar \bolN^\dem $ has the same distribution under all  limit rules $\R^\dem$ and thus $(\Ndemeps)$  converges in distribution. \\
To conclude the proof,  it remains to show that the solution  $\mathcal N^\dem$ of
$$\d  \mathcal N^\dem_t = \bolQ(\d t, ]0,\pi_t( X_{t^-},\bolmu^\dem) ]), \quad  X_{t} = Z_0^\natural + \mathcal N^{b,\natural}_t - \mathcal N^{d,\natural}_t$$
is well-defined and strongly dominated by $G^\dem$. 
The intensity  $ (k_t g_j(G^{b,\natural}))_{j=1\dots p}$ of $\bolG^\dem$ only depends on $G^\dem$ through $G^{b,\natural}$, and by the  domination Assumption \ref{HypCoxBirthDom}, $\bolmu^\dem(t, z) \leq (k_t g_j(n))_{j=1\dots p}$, for all $z \in \U_m$ with $m \leq n$.
As a consequence, for all $m \leq n$,
$$ \pi_t( m , \bolmu^\dem) = \int_{\U_{m}} \bolmu^\dem(t,z)\pi_t(m ,\d z)  \leq (k_t g_j(n))_{j=1\dots p}.$$ 
Thus, the intensity functional of $\mathcal{N}$ is strongly dominated by that of $G^\dem$, and by application of  Proposition \ref{comparison point process},  $\mathcal{N}^\dem$ is well-defined and strongly dominated by $\bolG^\dem$.
\end{proof}
\subsection{ A toy model}
We now give a toy example,   inspired from \cite{AUGER1995}, modeling the impact of spatial heterogeneity on a population living in a fragmented habitat.
\subsubsection{Model} The population's habitat is assumed to be divided on several patches, either favorable (type $1$) or unfavorable (type $2$).  The  population is  thus  structured   in  two subgroups  describing the number of individuals in each type of  habitat. The evolution of individuals is influenced by the type of patch in which they are located, as well as the random environment (weather, availability of resources, human action...),  which is also  correlated to the evolution of the population.

 Each individual in subgroup $i$ can die at  a stochastic individual rate $d^i_t$. The favorable subgroup 1 has a lower death rate: $d^1_t \leq d^2_t$ a.s.  Birth intensity functionals are also assumed to be linear (each individual gives birth at rate $b_t$ which is the same in both subgroup), with immigration at the stochastic rate $\lambda_t$.
 \begin{equation}
 \mu^{(d,i)}(t,z) = d^i_t z^i , \quad \mu^{(b,i)}(t,z) = b_t  z^i + \lambda_t,\quad i=1,2
 \end{equation}
When the population size increases individuals swap more often to the unfavorable habitat (type 2), for instance due to lack of ressources,  at a rate proportional to the population size $z^\natural =z^1 + z^2$, while, individuals swap from subgroup 2 to subgroup 1 at a ``constant'' stochastic rate, so that:
\begin{align}
&\mu^{(1,2)}(t,z) = (k_t^{12}z^\natural) z^1, \quad \mu^{(2,1)}(t, z) = k_t^{21}z^2, \\
 \nonumber  L_t^\sw & f(z) = (k_t^{12}z^\natural) z^1(f(z+ \bol e_2 -\bol e_1) - f(z)) + k_t^{21}z^2(f(z+ \bol e_1 - \bol e_2) - f(z)).
\end{align}
When the habitat fragmentation is important, individuals  migrates frequently between the different patches, which impact the population viability (\cite{PICHANCOURT2006}).  This situation is described by the two timescale BDS:
\begin{align}
& Z^{i, \epsilon}_t = N^{( b,i), \epsilon}_t - N^{(d,i),\epsilon}_t +  N^{(j,i),\epsilon}_t - N^{(i,j), \epsilon}_t , \quad i=1,2 \; j\neq i.\\
\nonumber & \d N^{(d,i),\epsilon}_t = Q^{(d,i)}(\d t, ]0, d_t^i Z^{i,\epsilon}_{t^-}] ), \quad  \d N^{(b,i),\epsilon}_t = Q^{(b,i)}(\d t, ]0, b_t Z^{i,\epsilon}_{t^-}+ \lambda_t] ), \\
&  \nonumber \d N^{(i,j),\epsilon} = Q^{(i,j)}(\d t, ]0, \frac{1}{\epsilon} \mu^{(i,j)}(t,Z^\epsilon_{t^-})]).
\end{align}
\subsubsection{Averaging kernel} 
The pure  swap CTMC  of generator $\cal L = L^\sw_s$  (the environment is frozen  at  $s$)  can be reinterpreted as follow: all individuals in the swap population evolve as ``independent CTMC'' on the state space $\{1,2\} = \{(1,0),(0,1)\}$, with  transition rates depending on the total number of individuals $n$.  The invariant measure for  such an  individual is:
\begin{equation*}
p_s^1(n)=  \dfrac{1}{ \alpha_s n + 1} \quad  p_s^2(n)=  \dfrac{\alpha_sn}{ \alpha_s n + 1}, \quad \text{with } \alpha_s = \frac{k_s^{12}}{k_s^{21}}.
 \end{equation*}
 The stationary measure $\pi_s(n,\cdot)$ on $\U_n$ of the Swap CTMC with frozen environment  is characterized by the distribution $\pi_s^1(n,\cdot)$ of the number of individuals in  subgroup 1 (since $z^2 = n - z^1$). By the foregoing,  $\pi_s^1(n,\cdot)$ is a binomial distribution with parameter $(n, p^1_s(n))$.
\subsubsection{Convergence of the demographic counting  process}   By Theorem \ref{weak convergence} the process $\bolN^{\dem,\epsilon}$  counting birth and death events in each subpopulation converges to the process $\mathcal{N}^\dem = (\mathcal{N}^b, \mathcal{N}^d)$ solution of 
\begin{align*}
& \d \mathcal{N}^{(d,i)}_t  = Q^{(d,i)} (\d t , ]0,  d_t^i X_{t^-}p_t^i({X}_{t^-})]),\quad \d \mathcal{N}^{(b,i)}_t  = Q^{b,i} (\d t , ]0,  (b_t + \lambda_t) {X}_{t^-}p_t^i({X}_{t^-})]),
\end{align*}
where $({X}_{t} = Z_0^\natural + \sum_{i=1}^2 \mathcal{N}^{(b,i)}_t - \mathcal{N}^{(d,i)}_t)$ is a Birth-Death process  in random environment which approximate the dynamic of the aggregated population. In particular,  its death intensity functional  is :
\begin{align}
&  \pi_t (n,\mu^{d,\natural} )   =    \big(d^1_t  p_t^1(n) + d^2_t p_t^2(n) \big) n=  \frac{d_t^1}{1 +  \alpha_t n }\big( 1  + \alpha_t w_t n)n, \quad \text{with } w_t =\frac{d_t^2}{d_t^1}.
 \end{align}
\subsection{Comments and perspectives}

 The previous example shows that nonlinearities can emerge due to frequent swap or migration events.  The aggregated mortality rates depend  on  the population size: when the population is small, the aggregated mortality rate is close to the death rate of the favorable subgroup 1. If the population size increase, for instance if the entry rate $\lambda$ increases, the aggregate mortality will  increases, resulting in  a decrease of  the number of individuals in the population and thus of  the aggregated  mortality rate. Thus, swap events can induce a regulation of the population size, whose equilibrium  also depends  on the random  environment.

 More generally, aggregation  results of this paper give a way to generate new birth-death models, with averaged  stochastic intensities,  reflecting the heterogeneity of the underlying population and taking into account the environment. This is particularly relevant for human populations  for which all  standard mortality rates models are  linear, which seems quite unrealistic in light of the observed complexity in human populations. However, in order to be well-suited to human populations,  the results  should be extended to the framework of age-structured populations in further work.

Sometimes, the only observed processes are the  aggregated population or  the demographic counting process $\bolN^\dem$.  Then, the problem can be seen as inverse problem.  If the demographic process is modeled by an multivariate counting process such as in \eqref{EqNdemLimit},  with intensities which  are a  mixture of subgroup-specific intensities, the converse problem would be to find a BDS population ``compatible" with the given aggregated structured, for instance by minimizing a given distance between the two populations. 

\appendix

\section*{Appendix}
\section{Proof of Proposition \ref{PropReciproqueComparaison} and Corollary \ref{UniquenessDomination} }
\label{AppendixProof}
\begin{proof}[Proof of proposition \ref{PropReciproqueComparaison}]
\rmi By construction,   $\indi_{\{(s,u) ; \;   u \leq \phi_s \}} \hat Q'(\d s,\d u )= \hat Q^N(\d s,\d u)$, whose marginal in time is $N$.  \\
\rmii  $Q^N$  has the $(\G_t \vee \F^{U,V}_t)$-intensity measure $\lambda_t \d t \otimes \d u$. Thus,  for any  non-negative predictable functional $H(\omega,t, u)$,   
\begin{align*}
 \E[\widehat Q^N(H) ]&=   \E[\int_0^\infty \int_0^{1}H(s, u\phi_s )Q^N(\d s, \d u)] = \E[\int_0^\infty\int_0^{1} H(s, u\phi_s )\lambda_s\d s \,\d u ] \\
&= \E[\int_0^\infty \int_0^{\phi_s} H(s,v) \lambda'_s \d v \,\d s], \quad \text{since} \>\phi_s = \frac{\lambda_s}{\lambda'_s}.
\end{align*}
The same reasoning yields $\E[\hat Q^{N^c}(H) ] = \E[\int_0^\infty\int_{\phi_s}^1 H(s,v) \lambda'_s \d v\, \d s]$. Summing the two expressions yield that $\E[\hat Q'(H)]  =  \E[\int_0^\infty\int_0^{1} H(s,u) \lambda'_s \d s\, \d u],$  and thus $\hat Q'$  has the $(\G_t \vee \F_t^{U,V})$-intensity   $\lambda'_s \d s \otimes \d u$ (\cite{bremaud1981point}).\\
Finally, Let  $ n \geq 1$, $h,t \geq 0$ and $\xi$ a $\G_{T_n^{'-}}$ measurable random variable. There exists a predictable process $Y$ such that $Y_{T'_n} = \xi$.  Then, by observing that 
\begin{equation*}
\indi_{\{{U}_n' \leq h\}}\indi_{\{T'_n \leq t\}} \xi  = \int_{\mathbb{R^+} \times [0,h]} Y_s \indi_{]T'_{n-1}, T'_n \wedge t ]}(s) \hat Q'(\d s, \d u), 
\end{equation*}
we obtain that 
\begin{align*}
& \E [\indi_{\{{U}_n' \leq h\}}\indi_{\{T'_n \leq t\}} \xi] = \E[\int_{T'_{n-1}\wedge t}^{ T'_n \wedge t} \int_0^h Y_s\lambda'_s \d s\d u] \\
&  = h  \E[\int_{T'_{n-1}\wedge t}^{ T'_n \wedge t}  Y_sdN'_s] =  h  \E[ Y_{T'_{n}}\indi_{\{T'_n \leq t\}}].
\end{align*}
This achieves to prove that ${U}_n'$ is a uniform variable independent of $\G_{T_n^-}$. By noting that $\P(U_1' \leq h_1, ... , U_n' \leq h_n) = \E[\prod_{i=1}^n \hat Q'(]T'_{i-1},T'_i]\times [0,h_i])]$, we obtain similarly that $U_1', U_2' , ...$ are i.i.d.
\end{proof}

\begin{proof}[Proof of Corollary \ref{UniquenessDomination} ] As before, we prove the one dimensional case for simplicity of notations.\\
For each $i=1,2$,  $ N^i$ is solution of  Equation \eqref{eqreciproquecomparaison}  By Proposition  \ref{PropReciproqueComparaison}, with $\lambda_t = \alpha(t, N_{t^-}^i)$.  The driving measure can be written as $Q^i= \sum_{n\geq 0} \delta_{T'_n}\delta_{U_n^{i'}}$, where $(U_n^{i'})$ is a sequence of i.i.d uniform random variables. \\
By Proposition \ref{PropReciproqueComparaison}, the distribution of $\big(U^{i'}_n, \dfrac{\alpha(T_n,\cdot)}{\lambda'_{T_n}},T'_n\big)_n$ does not depend on $i$, and thus $N^1$ and $N^2$ have the same distribution. 
\end{proof}

\section{Additional results}

\begin{lemma}
\label{LemmaTechContinuity}
Let $[x] \in \cal A^{2p}$ and let $(t^x_p)_{p\geq 1}$ its increasing sequence of jump times, with $t^x_p \rightarrow +\infty$. Let $A^x =\{[\alpha] \in \cal A^{2p} ; \; [\alpha] \prec [x]\}$.  
Then, $A^x $ is closed, and $\forall \; p\geq 1$, $\pi_{t_p^x}:[\alpha]  \in A^x \rightarrow \alpha(t^x_p)\in \N^{2p}$ is a continuous function on $A^x$. 
\end{lemma}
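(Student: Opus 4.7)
The plan is to exploit the simple but crucial observation that any $[\alpha]\prec [x]$ can only jump at times in the locally finite set $\{t_p^x\}_{p\geq 1}$, and each unit jump of $\alpha$ must occur in the same coordinate as the corresponding jump of $x$ (since elements of $\cal A^{2p}$ have no common jumps between components). Once this is made explicit, both assertions follow from a careful, but routine, use of the definition of the Skorokhod topology, restricted to paths whose jump times are constrained to a fixed discrete set.

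First I would prove continuity of $\pi_{t_p^x}$. Take $[\alpha_n]\to[\alpha]$ in $A^x$ with time changes $\lambda_n$ such that $\sup_{s\leq T}|\lambda_n(s)-s|\to 0$ and $\sup_{s\leq T}|\alpha_n(\lambda_n(s))-\alpha(s)|\to 0$ for every $T$. Pick $\epsilon\in(0,t_{p+1}^x-t_p^x)$ and set $t:=t_p^x+\epsilon$. Since $\alpha$ has jumps only at $\{t_q^x\}$, $t$ is a continuity point of $\alpha$, so $\alpha(t)=\alpha(t_p^x)$ and $\alpha_n(\lambda_n(t))\to\alpha(t_p^x)$. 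For $n$ large enough, $\lambda_n(t)\in(t_p^x,t_{p+1}^x)$, and because $\alpha_n$ is constant on this interval (its only possible jump there is at $t_p^x$ itself), $\alpha_n(\lambda_n(t))=\alpha_n(t_p^x)$. This gives $\alpha_n(t_p^x)\to\alpha(t_p^x)$, which is continuity of $\pi_{t_p^x}$.

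For closedness, I would take a Skorokhod-convergent sequence $[\alpha_n]\to[\alpha]$ with $[\alpha_n]\in A^x$ and show $[\alpha]\prec[x]$. Suppose $\alpha$ has a jump at some time $\tau$ in component $i$, of size $+1$ (since $\alpha\in\cal A^{2p}$). By standard properties of the Skorokhod topology, there exist jump times $\tau_n$ of $\alpha_n$ with $\tau_n\to\tau$, and with matching jump size in component $i$ for $n$ large. Since $\tau_n\in\{t_q^x\}$ for every $n$ and $\{t_q^x\}$ is closed, the limit $\tau$ also belongs to $\{t_q^x\}$, say $\tau=t_{q_0}^x$. The jumps of $\alpha_n$ at $t_{q_0}^x$ must coincide with the jump of $x$ at $t_{q_0}^x$ (same component, since $\alpha_n\prec x$ and $x$ has only one unit jump at $t_{q_0}^x$); passing to the limit, so does the jump of $\alpha$. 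Hence every jump of $\alpha$ is a jump of $x$ in the same coordinate, i.e.\ $[x]-[\alpha]$ is a multivariate counting process, so $[\alpha]\in A^x$.

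The only subtle point is the continuity statement, which would fail on the whole of $\cal A^{2p}$ — point evaluation at a fixed time is generally not Skorokhod-continuous. It works here precisely because the restriction to $A^x$ forces the discrete set of possible jump times to be fixed and locally finite, which lets me translate the time-change in the Skorokhod convergence into an actual evaluation at $t_p^x$ via the trick of shifting slightly to a continuity point. The closedness assertion is then essentially a consequence of the same rigidity: any limiting jump is forced to sit exactly on a jump of $x$ with the right coordinate.
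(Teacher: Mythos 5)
Your proof is correct, and it rests on the same key observation as the paper's: strong domination by $[x]$ forces all jump times of $[\alpha]$ and of the approximating $[\alpha_n]$ to lie in the fixed, locally finite set $\{t^x_q\}_{q\geq 1}$, which is exactly the rigidity that rescues the (generally false) Skorokhod-continuity of point evaluation. The execution differs in two ways. For continuity, the paper reduces to the case where $t^x_p$ is the \emph{first} jump time of $\alpha$, notes that the first jump times $T^n_1$ of $\alpha_n$ converge to $t^x_1$ and, being confined to the discrete set $\{t^x_q\}$, are eventually equal to it, whence $\alpha_n(t^x_1)=x(t^x_1)=\alpha(t^x_1)$; you instead evaluate at a nearby continuity point $t=t^x_p+\epsilon$ and use the constancy of both $\alpha$ and $\alpha_n$ on $(t^x_p,t^x_{p+1})$ together with the time-changed uniform convergence, which avoids the paper's (slightly informal) reduction to $p=1$ and works uniformly in $p$. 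Second, you actually prove the closedness of $A^x$, which the paper asserts but whose proof it omits; your argument (limiting jumps of $\alpha$ are approximated by jumps $\tau_n$ of $\alpha_n$, which are trapped in the closed discrete set $\{t^x_q\}$ and must eventually sit on, and agree in coordinate with, the corresponding jump of $x$) is the natural one and is sound. The only minor point worth making explicit is that $\tau_n\to\tau=t^x_{q_0}$ with $\tau_n\in\{t^x_q\}$ forces $\tau_n=t^x_{q_0}$ for $n$ large by discreteness, which is what licenses comparing the jump of $\alpha_n$ \emph{at} $t^x_{q_0}$ with that of $x$.
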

\begin{proof}
Let $[\alpha] \in A^x$ and $p\geq 1$. 
we recall that $[\alpha] \rightarrow \alpha(t)$ is a continuous function on $\cal A^{2p}$ (embedded with the Skorohod topology) iff $t$ is not a jump time of $\alpha$. Hence, if $t^x_p$ is not a jump time of $[\alpha]$, $\pi_{t^x_p}$ is continuous in $[\alpha]$.

When $\Delta\alpha(t^x_p) \neq 0$, then $\Delta\alpha(t^x_p) = \Delta x(t^x_p)$ since $[\alpha]\prec [x]$. By observing that $t^x_p$ is the first jump time of $\alpha(\cdot) - \alpha(\cdot \wedge t^x_p)$, we can only consider the case $p=1$, with $t^1_x$ is the first jump time of $[\alpha]$, so that   $\alpha(t_1^x) = x(t_1^x)$.

 Let $([\alpha_n])$ be a sequence in $A^x$ converging to $\alpha$, with $T^n_1$  the first jump time of $\alpha_n$. Since $[\alpha_n] \rightarrow [\alpha]$, $T^n_1 \rightarrow_n t^x_1$. For all $n\geq 0$, $[\alpha_n] \prec [x]$ , and hence the sequence $(T^1_n)$ is a subset of the discrete space $\{t^x_p ; \; p \geq 0\}$. Thus, $(T^1_n)$ is necessarily constant equal to $t^x_1$ above a given rank $N$, and for $n\geq N$, $t^x_1$ is also the first jump time of $[\alpha_n]$. This yields  that $\forall n\geq N$,  $\pi_{t^x_1}(\alpha_n) = \alpha_n (t^x_1) = x(t_1^x)= \alpha(t_1^x)$.  Therefore $\pi_{t^x_1}$ is continuous in $[\alpha]$.
\end{proof}

\begin{lemma}
\label{LemmaNtilde}
$(\tilde{\emph \bolN}^{\dem,\epsilon})$ converges stably to $\tilde \R^{\emph \dem}$ iff $(\Ndemeps)$ converges stably to a rule $\R^\dem$ such that  
for all $\mathcal O\otimes\F^{\cal A}_\infty$-measurable function $\tilde h(:=(h_s)_s)$, 
\begin{equation}
\label{EqlinkRuleNdemOmegatide}
\tilde{\R}^{\emph \dem} (\tilde h) = \R^{\emph \dem}[\int_0^\infty  h_s([\bar {\emph \bolN}^\dem]_s)\lambda^e(\d s)].
\end{equation}
\end{lemma}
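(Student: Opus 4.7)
\textbf{Proof plan for Lemma \ref{LemmaNtilde}.}

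The plan is first to establish the formula at the prelimit level as a tautological consequence of Fubini, then to pass to the limit in each direction by exhibiting the right test functional. At the prelimit level, the rules associated with $\Ndemeps$ and $\tilde \bolN^{\dem,\eps}$ are Dirac rules, so for any bounded $\O\otimes\F^{\cal A}_\infty$-measurable $\tilde h=(h_s)_s$ the identity
\begin{equation*}
\tilde{\R}^{\tilde \bolN^{\dem,\eps}}(\tilde h) = \tilde \E\bigl[h_\cdot(\omega,[\Ndemeps]_\cdot)\bigr] = \E\Bigl[\int_0^\infty h_s(\omega,[\Ndemeps]_s)\,\lambda^e(\d s)\Bigr] = \R^{\Ndemeps}\Bigl[\int_0^\infty h_s([\alpha]_s)\,\lambda^e(\d s)\Bigr]
\end{equation*}
holds by Fubini. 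So the lemma amounts to transferring this prelimit identity through the limit.

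For the implication ($\Leftarrow$), assume $(\Ndemeps)$ converges stably to $\R^\dem$. Given a test function $\tilde H\in\cC_{bmc}(\tilde\Omega\times\cal A^{2p})$, introduce
\begin{equation*}
F(\omega,[\alpha]) = \int_0^\infty \tilde H(\omega,s,[\alpha]_s)\,e^{-s}\,\d s.
\end{equation*}
Then $F$ is bounded by $\|\tilde H\|_\infty$ and $\G\otimes\B(\cal A^{2p})$-measurable, and I claim $F$ is continuous in $[\alpha]$ for each $\omega$. Indeed, if $[\alpha_n]\to[\alpha]$ in the Skorokhod topology on $\cal A^{2p}$, then $[\alpha_n]_s\to[\alpha]_s$ for every $s$ that is not a jump time of $[\alpha]$, i.e. Lebesgue-a.e.\ $s$; by continuity of $\tilde H(\omega,s,\cdot)$ and dominated convergence, $F(\omega,[\alpha_n])\to F(\omega,[\alpha])$. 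Hence $F\in\cC_{bmc}(\Omega\times\cal A^{2p})$, and the stable convergence of $\Ndemeps$ yields $\E[F(\omega,\Ndemeps)]\to\R^\dem[F(\omega,[\bar\bolN^\dem])]$. The prelimit identity identifies the left-hand side with $\tilde\E[\tilde H(\tilde\bolN^{\dem,\eps})]$, so this defines a stable limit $\tilde\R^\dem$ of $(\tilde\bolN^{\dem,\eps})$ that satisfies the announced formula.

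For the converse ($\Rightarrow$), suppose $(\tilde\bolN^{\dem,\eps})$ converges stably to $\tilde\R^\dem$. By Proposition \ref{RelativeCompactnessNdem}, $(\Ndemeps)$ is stably relatively compact in $\cR(\P,\cal A^{2p})$. Along any stably convergent subsequence with limit $\R^\dem$, the argument of the previous paragraph applies and forces $\R^\dem$ to be connected to $\tilde\R^\dem$ through the stated formula. It remains to check that this relation determines $\R^\dem$ uniquely, which ensures that all sub-limits coincide and hence the full sequence converges. For this, test the formula against $h_s(\omega,[\alpha])=f_n(s)H(\omega,[\alpha])$ with $f_n$ a continuous bump concentrated near a fixed $s_0$ and $H\in\cC_{bm}(\Omega\times\cal A^{2p})$: since $s\mapsto H(\omega,[\alpha]_s)$ has at most countably many discontinuities, the limit $n\to\infty$ recovers $\R^\dem[H(\omega,[\bar\bolN^\dem]_{s_0})]$, and letting $s_0\to\infty$ with $H$ bounded continuous on $\cA^{2p}$ recovers $\R^\dem[H(\omega,[\bar\bolN^\dem])]$. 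Thus $\R^\dem$ is uniquely specified, the full sequence converges stably, and the formula holds.

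The main technical obstacle is the continuity of the integrated functional $F$ in the $(\Leftarrow)$ direction: the map $[\alpha]\mapsto[\alpha]_s$ is not Skorokhod-continuous when $s$ is a jump time, so the argument has to rely on the fact that the set of such times is Lebesgue-negligible and on dominated convergence against the integrable weight $e^{-s}$. The alternative extension of the stable convergence mentioned after Definition \ref{StableCvgScdViewpoint} (relaxing the continuity requirement on a set of full rule mass) could also be invoked here if needed.
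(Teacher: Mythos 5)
Your proof is correct, and its forward half (integrating a $\cC_{bmc}$ test functional against $\lambda^e$, and checking continuity of the integrated functional via the Lebesgue-a.e.\ continuity of $[\alpha]\mapsto[\alpha]_s$ plus dominated convergence) is essentially the paper's argument for that direction. Where you genuinely diverge is the converse. The paper does not go through relative compactness and identification of sub-limits: for each $t$ and each $\G_t\otimes\F^{\cal A}_t$-measurable $H_t\in\cC_{bmc}(\Omega\times{\cal A}^{2p})$ it tests the assumed stable convergence of $(\tilde{\bolN}^{\dem,\eps})$ against the single functional $\tilde h_t(\omega,s,[\alpha])=e^{t}\ind_{[t,\infty[}(s)H_t(\omega,[\alpha]_t)$, for which the prelimit identity $\tilde\E[\tilde h_t([\tilde{\bolN}^{\dem,\eps}])]=\E[H_t([\Ndemeps]_t)]$ is \emph{exact} (since $[\alpha]_{t\wedge s}=[\alpha]_t$ for $s\ge t$ and $\int_t^\infty e^{t}\lambda^e(\d s)=1$); the convergence of $\E[H_t([\Ndemeps]_t)]$ for all such adapted functionals then yields stable convergence of $(\Ndemeps)$ by the convergence-determining-class criterion of Proposition 3.12 in \cite{Hausler2015stable}. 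Your route instead invokes tightness (Proposition \ref{RelativeCompactnessNdem}) and a bump-function argument to show the formula determines $\R^\dem$; this works, but it obliges you to control the approximations $f_n\to\delta_{s_0}$ and $s_0\to\infty$, where the discontinuity set of $s\mapsto H(\omega,[\alpha]_s)$ depends on $(\omega,[\alpha])$ — the cleanest fix being the right-continuity in $s$ of the stopped path, exactly the issue the paper's weight $e^{t}\ind_{[t,\infty[}(s)$ is designed to bypass. Both arguments are valid: yours is more self-contained (it leans only on results already proved in the paper), while the paper's is shorter and avoids any limiting procedure inside the test functionals.
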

\begin{proof}
Let us first assume that $(\Ndemeps)$ converges stably to a rule $\R^\dem$ in $\cR(\P, \cal A^{2p})$. Let $\tilde h\in \cal C_{bmc}(\tilde{\Omega}\times \cal A^{2p})$, and $H([\alpha]) = \int_0^\infty \tilde h(s,[\alpha]_s)\lambda^e(\d s)$, so that $\tilde{\E}[\tilde h( [\tilde \bolN^{\dem, \epsilon}])]= \E[H( [\Ndemeps])]$.\\
By letting $\epsilon \to 0$, we obtain that $(\tilde \bolN^{\dem, \epsilon})$ converges stably  in $\cR(\P\otimes \lambda^e,\cal A^{2p})$ to the rule $\tilde \R^\dem$ defined by  \eqref{EqlinkRuleNdemOmegatide}.

Reciprocally, assume that $(\tilde \bolN^{\dem,\epsilon} )$ converges stably to a rule $\tilde \R^\dem \in \cR(\P\otimes \lambda^e, \cal A^{2p})$. For $t\geq 0$, let $H_t$ be a $\G_t \otimes \F^{\cal A}_t$-measurable function in $\cal C_{bmc}(\Omega\times \cal A^{2p})$, and  $\tilde h_t(s,[\alpha]) = e^{t}\ind_{[t,\infty[}(s)H([\alpha]_t)$. Then, $\forall \epsilon >0$
\begin{equation*}
\tilde{\E}[\tilde{h}_t( [\tilde{\bolN}^{\dem,\epsilon}])] = \E[e^t\int_t^\infty H([{\bolN}^{\dem,\epsilon}]_{t\wedge s})\lambda^e(\d s)] = \E[H([\Ndemeps]_t)].
\end{equation*}
 $h_t(\tilde{\omega},\cdot) $ is  continuous on $ B_t = \{ [\alpha] ;  \;  \Delta \alpha(t) =0 \}$, and thus, we can apply the stable convergence and obtain  that  $\E[H_t(\cdot,[\Ndemeps])]$ converges for any $\G_t\otimes \F^{\cal A}_t$-measurable function $H_t \in \cal C_{bmc}(\Omega\times \cal A^{2p})$. Thus, by Proposition 3.12 in \cite{Hausler2015stable} $(\Ndemeps)$ converges stably in $\cR(\P,\cal A^{2p})$.
\end{proof}

\bibliographystyle{apalike}
\bibliography{BiblioPop2Timesalpha}

\end{document}